


\documentclass[12pt]{article}

\usepackage[all]{xypic}
\usepackage{amsmath}
\usepackage{amsfonts}
\usepackage{amssymb}
\usepackage{amsthm}
\usepackage{color}

\usepackage[top=2cm, bottom=2cm, left=2cm, right=2cm]{geometry}

\theoremstyle{plain}
\newtheorem{thm}{Theorem}[section]

\theoremstyle{plain}
\newtheorem{crl}[thm]{Corollary}

\newtheorem{prp}[thm]{Proposition}

\newtheorem{lmm}[thm]{Lemma}

\newtheorem{conj}[thm]{Conjecture}

\newtheorem{rmk}[thm]{Remark}


\newcommand {\tb}{\textbf}
\newcommand {\mb}{\mathbb}
\newcommand {\Z}{\mb Z}
\newcommand {\R}{\mb R}
\newcommand {\C}{\mb C}

\newcommand {\colim}{\textrm{colim}\ }

\newcommand {\lra}{\longrightarrow}



\begin{document}

\title{{Freudenthal theorem} and spherical classes in $H_*QS^0$}

\author{Hadi Zare\\
        School of Mathematics, Statistics,
        and Computer Sciences\\
        University of Tehran, Tehran, Iran\\
        \textit{email:hadi.zare} at \textit{ut.ac.ir}}
\date{}

\maketitle

\begin{abstract}
This note is on spherical classes in $H_*(QS^0;k)$ when $k=\Z,\Z/p$ with a special focus on the case of $p=2$ related to Curtis conjecture.
{We apply Freudenthal theorem to prove a vanishing result for the Hurewicz image of elements in} ${\pi_*^s}$ that factor through certain finite spectra. Either in $p$-local or $p$-complete settings, this immediately implies that elements of well known infinite families in ${_p\pi_*^s}$, such as Mahowaldean families, map trivially under the unstable Hurewicz homomorphism ${_p\pi_*^s}\simeq{_p\pi_*}QS^0\to H_*(QS^0;\Z/p)$. We also observe that the image of the integral unstable Hurewicz homomorphism $\pi_*^s\simeq\pi_*QS^0\to H_*(QS^0;\Z)$ when restricted to the submodule of decomposable elements, is given by $\Z\{h(\eta^2),h(\nu^2),h(\sigma^2)\}$. We apply this latter to completely determine spherical classes in $H_*(\Omega^dS^{n+d};\Z/2)$ for certain values of $n>0$ and $d>0$; this verifies a Eccles' conjecture on spherical classes in $H_*QS^n$, $n>0$, on finite loop spaces associated to spheres.
\end{abstract}

\textbf{AMS subject classification:$55Q45,55P42$}

\tableofcontents

\section{Introduction and statement of results}


Let $QS^0=\colim \Omega^iS^i$ be the infinite loop space associated to the sphere spectrum. Curtis conjecture then reads as follows
(see \cite[Proposition 7.1]{Curtis} and \cite{Wellington} for more discussions).

\begin{conj}[{Curtis Conjecture}]\label{conjecture}
In positive degrees only the Hopf invariant one and Kervaire invariant one elements survive under the unstable Hurewicz homomorphism $h:{_2\pi_*^s}\simeq{_2\pi_*}QS^0\lra H_*QS^0$.
\end{conj}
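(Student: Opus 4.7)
The plan is to split $_2\pi_*^s$ into the submodule $\mr{Dec}$ of decomposable elements and a complement of indecomposable classes, then attack each piece with a different tool. The target statement is equivalent to saying that for $x\in {_2\pi_*^s}$ in positive degree, $h(x)=0$ unless $x$ is a Hopf invariant one element (so $x\in\{\eta,\nu,\sigma\}$) or a Kervaire invariant one element (so $x\in\{\eta^2,\nu^2,\sigma^2,\theta_j\}$). I would therefore try to show that each of these two classes of indecomposables behaves as predicted, and that no other infinite family of spherical classes can appear.

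First, for $x\in\mr{Dec}$, I would invoke the integral computation advertised in the abstract, which identifies the image of the integral Hurewicz map on the decomposable submodule as $\Z\{h(\eta^2),h(\nu^2),h(\sigma^2)\}$. After reducing modulo $2$ via the universal coefficient theorem, this says that any decomposable $x$ with $h(x)\neq 0\in H_*(QS^0;\Z/2)$ is $2$-locally a unit multiple of $\eta^2,\nu^2$, or $\sigma^2$, in line with the conjecture.

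Second, for indecomposable $x$, I would lean on the Freudenthal-based vanishing theorem advertised in the abstract: if $x$ factors through a suitable finite spectrum then $h(x)=0$ in $H_*(QS^0;\Z/2)$. This is designed to kill the Mahowaldean families $\eta_j,\nu_j,\sigma_j$ and related constructions, since by their very definition they factor through finite Thom complexes, so a large part of the Adams $E_\infty$ is eliminated at one stroke. One then has to inspect the remaining low-filtration indecomposables and check by hand or via secondary operations that the Hopf invariant one classes contribute the expected $h(\eta),h(\nu),h(\sigma)$, while the Kervaire classes $\theta_j$ contribute their known nonzero Hurewicz images.

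The hardest step, and the essential reason Curtis's conjecture remains open, is to show that the indecomposable elements of $_2\pi_*^s$ outside the Hopf and Kervaire invariant one families never produce spherical classes. This requires a structural understanding of indecomposables in the $2$-local stable stems that goes beyond Freudenthal-type finite-spectrum factorizations and beyond the decomposable argument. One would presumably have to combine the filtration by powers of the augmentation ideal of $H_*QS^0$ with an unstable Adams spectral sequence analysis, and then match all potential survivors against the Hill--Hopkins--Ravenel restrictions on Kervaire invariant one classes. Carrying this matching through, especially ruling out hypothetical new infinite families that do not factor through any convenient finite spectrum, is the genuine obstacle and the place where any realistic attempt at the full conjecture would be expected to break down.
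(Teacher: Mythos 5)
The statement you were asked about is a \emph{conjecture} (Curtis' conjecture); the paper does not prove it and does not claim to. It only offers partial evidence: Theorem \ref{main0} computes the image of the integral Hurewicz map on decomposables (exactly the first piece of your plan), and Theorem \ref{main1} together with Theorems \ref{infinitefamily}, \ref{infinitefamily-2} and \ref{BrownGitlervanishing} kills specific known or hypothetical families that factor through suitable finite spectra (your second piece). So the two parts of your plan that are solid are precisely the paper's own theorems, and they are correctly identified; but your own final paragraph concedes the remaining step --- showing that \emph{no} indecomposable outside the Hopf and Kervaire invariant one families is spherical --- and that step is exactly the open content of the conjecture. A plan that defers the decisive step is not a proof, so there is a genuine gap, and it is the gap the paper itself leaves open.

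Two smaller inaccuracies in the parts you do sketch. First, factoring through a finite spectrum is not by itself enough to apply the Freudenthal-based vanishing result: Theorem \ref{main1} needs a dimension/connectivity hypothesis ($n\leqslant 2r+1$ for an $r$-connected $E$ with $h^s(f_E)=0$, or $n\geqslant 2r+1$ with $c_*=0$ for a finite $E$ of dimension $r$ in the dual form), and some families (e.g.\ Lin's family through Brown--Gitler spectra) fall outside this range and require the separate argument of Theorem \ref{BrownGitlervanishing}. Second, the assertion that the Kervaire classes $\theta_j$ ``contribute their known nonzero Hurewicz images'' overstates what is known: for large $j$ neither the existence of $\theta_j$ nor the nonvanishing of their unstable Hurewicz images is established, and the conjecture as stated only asserts that nothing \emph{else} survives, so this claim is neither needed nor currently available.
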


Here and throughout, we write ${_p\pi_*^s}$ and ${_p\pi_*}$ for the $p$-components of $\pi_*^s$ and $\pi_*$, respectively. We also write $H_*$ for $H_*(-;k)$ where the coefficient ring will be clear from the context with an interest in $k=\Z/p$; although some of our results such as Theorem \ref{main1} holds when $k$ is an arbitrary $p$-local commutative coefficient ring. We work $p$-locally or $p$-complete which again will be clear from the context. \\

{We wish to examine Conjecture \ref{conjecture} in the light of Freudenthal theorem.} Suppose $E$ is a connected and connective spectrum, i.e. $\pi_iE\simeq0$ for $i<1$. We write $\Omega^\infty E=\colim \Omega^iE_i$ for the infinite loop space associated to $E$, $QX$ for $\Omega^\infty(\Sigma^\infty X)$ when $X$ is a space, and $\epsilon:\Sigma^\infty \Omega^\infty E\to E$ for the evaluation map which is the stable adjoint to the identity $\Omega^\infty E\to\Omega^\infty E$. The map $\epsilon$ induces stable homology suspension $H_*\Omega^\infty E\to H_*E$. {The following is a consequence of Freudenthal theorem.}




\begin{prp}\label{equiv1}
Suppose $E$ is $r$-connected with $r>0$. Then, $\epsilon$ is a weak $(2r+1)$-equivalence.
\end{prp}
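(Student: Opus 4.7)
The plan is to recognize the map induced by $\epsilon$ on homotopy as a retraction of the Freudenthal stabilization map for the space $\Omega^\infty E$, and then to read off the connectivity directly from Freudenthal.

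First I would set $X = \Omega^\infty E$ and observe that, since $E$ is $r$-connected, the standard identification $\pi_k X \cong \pi_k E$ for $k\geq 0$ makes $X$ itself $r$-connected. Second, I would use the triangle identity of the adjunction $\Sigma^\infty \dashv \Omega^\infty$: the composite
\[
X \xrightarrow{\ \eta_X\ } \Omega^\infty \Sigma^\infty X \xrightarrow{\ \Omega^\infty \epsilon\ } X
\]
is the identity, where $\eta$ is the adjunction unit. Passing to $\pi_k$ and using $\pi_k \Omega^\infty \Sigma^\infty X = \pi_k \Sigma^\infty X = \pi_k^s X$, this rewrites as the identity factorization
\[
\pi_k(X) \xrightarrow{\ s\ } \pi_k^s(X) \xrightarrow{\ \epsilon_*\ } \pi_k(E),
\]
where $s$ is precisely the Freudenthal stabilization map $[S^k, X] \to \colim_n [S^{k+n}, \Sigma^n X]$.

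The crux will then be an invocation of the classical Freudenthal suspension theorem in iterated form: for the $r$-connected space $X$, the stabilization $s$ is an isomorphism for $k \leq 2r$ and a surjection for $k = 2r+1$. From this, $\epsilon_*$, being a split surjection onto $\pi_k(E)$, is surjective in every degree, and in the range $k \leq 2r$ it agrees with the inverse of the isomorphism $s$, hence is itself an isomorphism. For $k < 0$ both sides vanish by connectivity. Together these give exactly that $\epsilon$ is a weak $(2r+1)$-equivalence of spectra.

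The main thing to be careful about, rather than a genuine obstacle, is to invoke the correct triangle identity so that $\epsilon_* \circ s = \mathrm{id}$ (making $\epsilon_*$ a retraction of $s$) and not the other composite, together with the naturality needed to identify the unit with the geometric stabilization map $X\to QX$. Once this orientation is fixed, no machinery beyond Freudenthal is required; in particular no Blakers--Massey input or spectral sequence argument is needed.
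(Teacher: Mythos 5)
Your argument is essentially the paper's own: both proofs rest on the triangle identity $(\Omega^\infty\epsilon)\circ\iota=\mathrm{id}_X$ for $X=\Omega^\infty E$ (your $\eta_X$ is the paper's $\iota:X\to QX$) together with the iterated Freudenthal theorem for the $r$-connected space $X$. The paper packages the consequence via the homotopy-fibre identification $\mathrm{Fib}(\iota)\simeq\Omega\,\mathrm{Fib}(\Omega^\infty\epsilon)$, whereas you argue directly on homotopy groups using the retraction $\epsilon_*\circ s=\mathrm{id}$; your version is, if anything, slightly more elementary and sidesteps the fibre bookkeeping.

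One point to tighten: as the proposition is used later (Theorem \ref{main1}(i) takes $n\leqslant 2r+1$ and needs the rows of the displayed diagram there to be isomorphisms), ``weak $(2r+1)$-equivalence'' is read in the paper as an isomorphism on $\pi_i$ for all $i\leqslant 2r+1$, whereas you only record an isomorphism for $i\leqslant 2r$ and surjectivity at $2r+1$. Your setup supplies the missing injectivity in degree $2r+1$ in one line: if $y\in\pi_{2r+1}^s(X)$ satisfies $\epsilon_*y=0$, write $y=s(x)$ (possible since $s$ is onto in that degree by Freudenthal); then $x=\epsilon_*s(x)=0$, hence $y=0$. With that observation added, your proof gives exactly the statement the paper proves and uses.
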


\begin{proof}
{First, note that for a composition $X\stackrel{f}{\to}Y\stackrel{g}{\to}Z$ of maps among pointed spaces, the homotopy fibre of the induced map $\mathrm{Fib}(gf)\to\mathrm{Fib}(g)$, up to weak homotopy equivalence, can be identified with $\mathrm{Fib}(f)$. Here, $\mathrm{Fib}(f)$ denotes the homotopy fibre of $f$. In particular, for $X=Z$, if $gf$ is homotopic to the identity then $\mathrm{Fib}(f)$ is weak homotopy equivalent to $\Omega\mathrm{Fib}(g)$.\\
Now, let $E$ be an $r$-connected spectrum. Then, $X=\Omega^\infty E$ is an $r$-connected space. Since, $X$ is an infinite loop space then the composition $X\stackrel{\iota}{\to}QX\stackrel{\Omega^\infty\epsilon}{\to}X$ equals to identity \cite{May-G} (see also \cite{BE1}). It follows that there is a weak homotopy equivalence $\mathrm{Fib}(\Omega^\infty\epsilon)\to\Omega\mathrm{Fib}(\iota)$. On the other hand, by Freudenthal theorem, the map $\iota:X\to QX$ induces an isomorphism $\pi_iX\to\pi_iQX\simeq\pi_i^sX$ for $i\leqslant 2r$. This means that $\iota$ is a $2r$-equivalence, that is $\pi_i\mathrm{Fib}(\iota)\simeq 0$ for $i\leqslant 2r$, consequently $\pi_i\mathrm{Fib}(\Omega^\infty\epsilon)\simeq 0$ for $i\leqslant 2r+1$. This means that $(\Omega^\infty\epsilon)_*:\pi_iQ\Omega^\infty E\to\pi_i\Omega^\infty E$ is an isomorphism $i\leqslant 2r+1$. Equivalently, $\epsilon_*:\pi_i\Sigma^\infty\Omega^\infty E\to\pi_iE$ is an isomorphism for $i\leqslant 2r+1$.}
\end{proof}

{Note that if $E$ is a $CW$-spectrum, then the weak homotopy equivalence would give a homotopy equivalence. To state our next result, we fix our terminology; for a spectrum $E$ we refer to the Hurewicz homomorphism $h^s:\pi_*E\to H_*E$ as the stable Hurewicz homomorphism whereas, in positive degrees, we refer to $h:\pi_*E\simeq\pi_*\Omega^\infty E\to H_*\Omega^\infty E$ as the unstable Hurewicz homomorphism. We can apply the above equivalence to obtain some information on the image of the unstable Hurewicz homomorphism.} We have the following.


\begin{thm}\label{main1}
Suppose $f:S^n\lra QS^0$ is given so that for some spectrum $E$ there is a factorisation $S^n\stackrel{f_E}{\lra}\Omega^\infty E\lra QS^0$. Then, the following statements hold.\\
(i) If $E$ is $r$-connected, $n\leqslant 2r+1$, and $h^s(f_E)=0$, then $h(f)=0$. {In particular, if $E$ is a $CW$-spectrum and $f_E$ maps trivially under $p$-local stable Hurewicz map $h^s_{(p)}:{_p\pi_*}E\to H_*(E;\Z/p)$ only for some prime $p$ then $f$ maps trivially under the $p$-local unstable Hurewicz map ${_p\pi_*}QS^0\to H_*QS^0$. A similar statement holds in the $p$-complete setting.}\\
(ii) If the above factorisation is induced by a factorisation of a map of spectra
$S^n\stackrel{\alpha}{\lra}E\stackrel{c}{\lra}S^0$ where $E$ is a finite $CW$-spectrum of dimension $r$, $n\geqslant 2r+1$, and $c_*=0$ then $h(f)=0$. {In particular, if $c_*=0$ holds $p$-locally only for some prime $p$ then $f$ maps trivially under the $p$-local unstable Hurewicz map ${_p\pi_*}QS^0\to H_*QS^0$. A similar statement holds in the $p$-complete setting.}
\end{thm}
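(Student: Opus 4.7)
For part (i), I would apply Proposition \ref{equiv1} directly. Writing $g\colon\Omega^\infty E\to QS^0$ for the second factor of the given factorisation, naturality of the Hurewicz map gives $h(f)=g_{*}h(f_E)$, so it suffices to show that $h(f_E)=0$. Consider the commutative square
\[
\begin{array}{ccc}
\pi_n\Omega^\infty E & \xrightarrow{\ h\ } & H_n\Omega^\infty E\\
\parallel & & \downarrow\epsilon_{*}\\
\pi_nE & \xrightarrow{\ h^s\ } & H_nE
\end{array}
\]
in which the left vertical equality is the $\Sigma^\infty\dashv\Omega^\infty$ adjunction and the right vertical arrow is induced by the counit $\epsilon$; the composite is the stable Hurewicz homomorphism applied to the spectrum-level adjoint $\hat f_E\in\pi_nE$. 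By Proposition \ref{equiv1}, $\epsilon_{*}$ is an isomorphism in degree $n\leqslant 2r+1$, so the hypothesis $h^s(f_E)=0$ forces $h(f_E)=0$, and thus $h(f)=0$. The $p$-local and $p$-complete versions follow by running the same argument with the appropriate coefficients.

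For part (ii) the hypotheses on $E$ are essentially dual to those of (i) --- low dimension and large $n$ rather than high connectivity and small $n$ --- so Proposition \ref{equiv1} cannot be applied to $E$ itself. My plan is to reduce part (ii) to part (i) via Spanier--Whitehead duality. Let $DE$ denote the $S$-dual of the finite spectrum $E$, so that $DE$ has cells in $[-r,0]$, and set $E':=\Sigma^nDE$, a finite spectrum with cells in $[n-r,n]$ and hence $(n-r-1)$-connected. The dualised and shifted factorisation
\[
S^n\xrightarrow{\Sigma^nDc}E'\xrightarrow{\Sigma^nD\alpha}S^0
\]
represents the same class in $\pi_n^s$ as the original $c\alpha$, up to the sign coming from self-duality of sphere spectra. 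The numerical condition $n\geqslant 2r+1$ is equivalent to $n\leqslant 2(n-r-1)+1$, which is precisely the part (i) hypothesis for $E'$ with connectivity $r':=n-r-1$. Moreover, Spanier--Whitehead duality converts $c_{*}=0$ on $H_{0}E$ into $(Dc)_{*}=0$ on $H_{0}DE$, i.e., $(\Sigma^nDc)_{*}=0$ on $H_nE'$; this is exactly the stable-Hurewicz vanishing required to apply part (i). The resulting $h(f')=0$ for the $QS^0$-adjoint $f'$ of the dualised composite gives $h(f)=0$, since $f'=\pm f$ in $[S^n,QS^0]$. The $p$-local and $p$-complete versions follow analogously.

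The main obstacle is part (ii): one must verify carefully that the numerical bookkeeping (the equivalence $n\geqslant 2r+1\iff n\leqslant 2r'+1$, and the identification of the dualised stable class with the original up to sign) is consistent, and that $c_{*}=0$ on $H_{*}E$ dualises cleanly to $(Dc)_{*}=0$ on $H_{*}DE$ (which is automatic over field coefficients $\F_p$ and requires freeness of $H_0E$ integrally). Part (i) is essentially immediate from Proposition \ref{equiv1}; the novelty of part (ii) lies in noticing that the dimensional-range hypothesis $n\geqslant 2r+1$ is exactly what is needed to invoke Proposition \ref{equiv1} on the Spanier--Whitehead dual spectrum.
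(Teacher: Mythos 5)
Your proposal is correct and takes essentially the same route as the paper: part (i) is the same application of Proposition \ref{equiv1} together with the relation $h^s=\epsilon_*\circ h$ (the paper merely phrases it as a contradiction), and part (ii) is the same reduction to part (i) via Spanier--Whitehead duality, using self-duality of elements of $\pi_*^s$ and the fact (cited in the paper as Kuhn's Lemma A.3) that $c_*=0$ dualises to $(Dc)_*=0$ for finite spectra. Your explicit bookkeeping that $\Sigma^nDE$ is $(n-r-1)$-connected and that $n\geqslant 2r+1$ is exactly the hypothesis of (i) for this dual spectrum is precisely the dimensional argument the paper leaves implicit.
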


{As the first application of the $p$-local/$p$-complete version of the above Theorem, we have the following.}

\begin{thm}\label{infinitefamily}
At the prime $p=2$, let $f$ be an element of one of the following families (some of which are only known to be potentially existing):\\
(i) Mahowald's family, $\eta_i\in{_2\pi_{2^i}}QS^0$ with $i\geqslant 3$, detected by $h_1h_i$ in the ASS;\\
(ii) Lin's family in ${_2\pi_{2^{i}+18}^s}$, $i>10$, detected by $c_1h_i$ in the Adams spectral sequence;\\
(iii) Lin's family in ${_2\pi_{2^{i+1}+37}^s}$, $i>12$, detected by $h_i^2h_3d_1$ in the Adams spectral sequence;\\
(iv) Bruner's family, $\tau_i\in {_2\pi_{2^{i+1}+1}^s}$, $i\geqslant 5$, detected by $h_2h_i^2$ in the ASS;\\
Then, for $h:{_2\pi_*^s}\simeq{_2\pi_*}QS^0\to H_*QS^0$ we have $h(f)=0$.\\
At an odd prime $p$, the elements of\\
(vi) Cohen's family in ${_p\pi_{2(p-1)p^i+2p-5}^s}$ detected by $h_0b_{j-1}$\\
map trivially $h:{_p\pi_*^s}\simeq{_p\pi_*}QS^0\to H_*QS^0$.
\end{thm}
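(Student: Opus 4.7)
The plan is to deduce each vanishing from Theorem \ref{main1}(ii) in its $p$-local form. For each element $f\in{_p\pi_n^s}$ in one of the listed families, I aim to exhibit a factorization $S^n\xrightarrow{\alpha} E\xrightarrow{c} S^0$ through a finite $CW$-spectrum $E$ of dimension $r$ with $n\geqslant 2r+1$, together with the vanishing $c_\ast=0$ on mod-$p$ homology. Since $H_\ast(S^0;\Z/p)$ is concentrated in degree $0$, the homological vanishing is automatic whenever $E$ is connective with bottom cell in strictly positive dimension (i.e.\ $H_0(E;\Z/p)=0$). Hence the task reduces to producing, for each family, a factorization through a suitably low-dimensional connective finite spectrum.

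At $p=2$, the Mahowaldean families (i)--(iv) each admit such a factorization by their known geometric construction. For Mahowald's $\eta_i$ in degree $n=2^i$, the $bo$-resolution construction realises $\eta_i$ as a composition through a finite $2$-local spectrum of dimension at most $2^{i-1}-1$ (related to Brown--Gitler spectra and, via the Kahn--Priddy theorem, to stunted real projective spaces), which satisfies $n\geqslant 2r+1$. Lin's families detected by $c_1h_i$ in degree $2^i+18$ and by $h_i^2h_3d_1$ in degree $2^{i+1}+37$, and Bruner's $\tau_i$ detected by $h_2h_i^2$ in degree $2^{i+1}+1$, are obtained by smashing into Mahowald's construction a finite complex carrying the auxiliary Adams class ($c_1$, $h_3d_1$, or $h_2$ respectively); the factoring spectrum is again of dimension linear in $i$, negligible against the exponentially growing degree. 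For Cohen's family at odd $p$, in degree $2(p-1)p^i+2p-5$, the $p$-primary analogue through a $p$-local Brown--Gitler or stunted lens-space spectrum supplies a factorization of dimension linear in $i$. In every case the factoring spectrum is connective with bottom cell above dimension $0$, so $c_\ast=0$ holds automatically, and Theorem \ref{main1}(ii) delivers $h(f)=0$.

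The main obstacle will be extracting the precise factorizations together with the sharp dimension estimate from the original literature, most delicately for Lin's conjectural families, where the factorization must be read off from the Toda-bracket interpretation of the detecting class on the Adams $E_2$-page. For Mahowald's $\eta_i$, Bruner's $\tau_i$, and Cohen's $h_0b_{j-1}$, the factorization is already embedded in the original construction and requires only the bookkeeping needed to make the dimension estimate precise. The passage between the $p$-local and $p$-complete conclusions is then handled by the parenthetical clauses of Theorem \ref{main1}(ii) directly.
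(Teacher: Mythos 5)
Your overall strategy coincides with the paper's: quote the known geometric constructions of these families as compositions $S^n\to E\to S^0$ through finite spectra and feed them into Theorem \ref{main1}. But there is a genuine gap in the way you propose to execute it, namely in insisting on part (ii) alone together with the claim that the factoring spectrum always has ``dimension linear in $i$, negligible against the degree''. That claim is false for the factorizations that actually exist in the literature. For Mahowald's $\eta_i$ and Bruner's $\tau_i$ the factoring complexes are $\R P^{2^{i-3}}$ (or $T(2^{i-3})$, resp. $\Sigma T(2^{i-3})$), of dimension about $2^{i-3}$ --- exponential in $i$, not linear, though still small enough that $n\geqslant 2r+1$ holds and part (ii) applies with $c_*=0$ automatic, as you say. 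The real problem is with Lin's families (ii) and (iii): the known constructions are $S^{2^i+18}\to\Sigma^{2^i}P_1^4\to S^0$ and $S^{2^{i+1}+37}\to\Sigma^{2^{i+1}+1}\C P^2\to S^0$, where the factoring spectrum has dimension $2^i+4$, respectively $2^{i+1}+5$, i.e.\ it differs from $n$ only by a bounded amount. Then $n\geqslant 2r+1$ fails badly for large $i$, so Theorem \ref{main1}(ii) simply does not apply, and no factorization through a complex of dimension roughly $n/2$ is supplied by the sources you would be quoting (for the $h_i^2h_3d_1$ family none is known; for the $c_1h_i$ family one can instead use Kuhn's $2$-complete factorization through $\R P^{2^{i-3}}$, but that is a different construction than the one your ``smash in the auxiliary class'' picture produces).

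The paper closes exactly this case by using part (i) of Theorem \ref{main1} rather than part (ii): the spectra $\Sigma^{2^i}P_1^4$ and $\Sigma^{2^{i+1}+1}\C P^2$ are highly connected ($r$-connected with $r$ close to $n$, and $n\leqslant 2r+1$ for the stated range of $i$), and the stable Hurewicz image $h^s(f_E)$ vanishes for the trivial dimensional reason that these spectra have no cells (hence no homology) in degree $n$. So ``dimensional reasons together with Theorem \ref{main1}'' in the paper means a case split: part (ii) when the complex is low-dimensional relative to $n$ (families (i), (iv), the odd-primary family, and the $\R P^{2^{i-3}}$ version of (ii)), and part (i) when the complex is highly connected with top cell just above its bottom cell (families (ii) and (iii) as originally constructed). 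Your proposal as written would stall precisely at the point you flag as the ``main obstacle'': the sharp dimension estimate you need for part (ii) is not just delicate to extract for Lin's families --- it is not true of their known constructions.
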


The families mentioned in the above theorem, are well known families that are known (some only potentially) to exist in ${_p\pi_*^s}$. There exist also hypothetical ways to construct infinite families in ${_p\pi_*^s}$ in the literature which are constructed using certain maps $S^{2^i}\to\R P^{2^{i-3}}$ or $S^{2^i}\to T({2^{i-3}})$ as well as some elements factoring through $\R P^2$ or $\C P^2$; examples of families constructed by using either one of these constructions can be found in \cite[Examples 4.5, 4.9, 5.4, 5.6]{Kuhn-construction} at $p=2$, and \cite[Example 3.9]{HunterKuhn} at odd primes. We have the following.

\begin{thm}\label{infinitefamily-2}
($p=2$) Let $f$ be an element of one of the families constructed in Kuhn that are detected by \\
(i) $bh_2h_i$; (ii) $bh_3h_{i-1}^2$; (iii) $ah_{i-1}^2$\\
where $a$ and $b$ are certain elements in the ASS as in \cite[Theorems 4.4, 5.3]{Kuhn-construction} and \cite[Theorem 5.5]{Kuhn-construction} respectively. Then, $f$ maps trivially into $H_*QS^0$ under $h$.\\
(ii) Similar vanishing results hold for the elements that are represented by certain triple Toda brackets \cite[Lemma 4.6, Theorem 4.8]{Kuhn-construction}.\\
($p>2$) Let $f$ be an element of the family considered by Hunter and Kuhn \cite[Corollary 3.8]{HunterKuhn} detected by $dh_0h_i$ in the Adams spectral sequences. Then, $h(f)=0$ for $h:{_p\pi_*^s}\simeq{_p\pi_*}QS^0\to H_*QS^0$.
\end{thm}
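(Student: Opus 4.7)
The plan is to apply Theorem~\ref{main1}(ii) to each family separately, using the explicit factorisations $S^n\stackrel{\alpha}{\lra}E\stackrel{c}{\lra}S^0$ recorded in the source constructions of Kuhn \cite{Kuhn-construction} and Hunter--Kuhn \cite{HunterKuhn}. In each case $E$ is a small finite CW-spectrum of the kind described in the paragraph preceding the theorem---a bounded stunted projective space, a Thom spectrum such as $T(2^{i-3})$, or a two-cell complex built out of $\R P^2$ or $\C P^2$. Theorem~\ref{main1}(ii) reduces the assertion $h(f)=0$ to verifying, in each case, the dimension inequality $n\geqslant 2r+1$ with $r=\dim E$ together with the homological vanishing $c_*=0$ on mod-$p$ homology.

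For the three $p=2$ families detected by $bh_2h_i$, $bh_3h_{i-1}^2$, and $ah_{i-1}^2$, the stem $n$ of the constructed element grows like $2^i$ or $2^{i+1}$ while $\dim E$ is on the order of $2^{i-3}$, so the dimension inequality holds in the ranges of $i$ stated in \cite[Theorems 4.4, 5.3, 5.5]{Kuhn-construction} (those ranges being chosen precisely to guarantee it). The vanishing $c_*=0$ mod $2$ holds for structural reasons: in the stunted projective space case $E$ has no zero-dimensional cell, so $H_0(E;\Z/2)=0$ and $c_*$ is automatically zero, while for the remaining constructions $c$ is a transfer or Kahn--Priddy style map that is known to be null on the bottom cell mod $2$.

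The Toda bracket refinement is handled by realising a representative of $\la u,v,w\ra$ as an honest composite through the mapping cone $C_u$ of $u$, which is a finite CW-spectrum of dimension $\dim u+1$. Inspection of the brackets appearing in \cite[Lemma 4.6, Theorem 4.8]{Kuhn-construction} should show that the dimension bound $n\geqslant 2\dim C_u+1$ is still met and that the resulting map $C_u\lra S^0$ factoring the bracket has $c_*=0$ mod $2$, so Theorem~\ref{main1}(ii) again applies. The Hunter--Kuhn family at odd primes \cite[Corollary 3.8]{HunterKuhn} is constructed by an entirely analogous factorisation through a small $p$-local finite complex built from $\C P^2$, and exactly the same verification in the $p$-local (or $p$-complete) form of Theorem~\ref{main1}(ii) finishes that case.

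The main obstacle is not conceptual but rather the per-family bookkeeping: for each family one must extract from the cited papers the specific finite spectrum $E$ and map $c$ used in the construction, then check both the numerical bound and the mod-$p$ vanishing of $c_*$. The numerical bound is comfortably satisfied in all cases because the families live in exponentially growing stems while the auxiliary spectra have only slowly growing dimensions; the more delicate point is the vanishing of $c_*$, which rests on the homological behaviour of the transfer, collapse, or periodicity maps used in the constructions of \cite{Kuhn-construction} and \cite{HunterKuhn}.
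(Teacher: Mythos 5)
Your proposal is correct and is essentially the paper's own argument: the paper omits the proof as ``evident from the construction of the families,'' meaning exactly what you do---each element comes with a stable factorisation $S^n\to E\to S^0$ through a small finite spectrum ($\R P^{2^{i-3}}$, $T(2^{i-3})$, suspensions of $\R P^2$ or $\C P^2$, or a cone realising the Toda bracket), the stem grows like $2^i$ while $\dim E$ grows like $2^{i-3}$, and Theorem \ref{main1}(ii) in its $p$-local/$p$-complete form applies. The only simplification worth noting is that the vanishing $c_*=0$ needs no appeal to transfer or Kahn--Priddy properties: every auxiliary spectrum occurring here has its bottom cell in positive dimension, so for any $c:E\to S^0$ one has $H_0E=0$ and $H_{>0}S^0=0$, whence $c_*=0$ automatically.
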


The proof of this is evident from the construction of the families. So, we omit the proof.\\

As an example where a factorisation such as required by Theorem \ref{main1} is not known, we prove a vanishing result for elements of ${_2\pi_*^s}$ which factor through certain Brown-Gitler spectra; this will include a family due to Lin \cite{Lin-BG}.

\begin{thm}\label{BrownGitlervanishing}
Suppose $r$ is even. Then for any $f:S^{2r}\to\Sigma^rB(r/2)\to S^0$ we have $h(f)=0$ where $h:{_2\pi_{2r}^s}\simeq{_2\pi_{2r}QS^0}\to H_{2r}QS^0$ is the unstable Hurewicz homomorphism.
\end{thm}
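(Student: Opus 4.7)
My plan exploits the observation noted earlier in the paper that the image of the decomposable submodule of $\pi_*^s$ under the integral unstable Hurewicz homomorphism $\pi_*^s\to H_*(QS^0;\Z)$ is precisely $\Z\{h(\eta^2),h(\nu^2),h(\sigma^2)\}$, concentrated in dimensions $2,6,14$. The strategy has three steps.

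First, I would establish the structural claim that any $f$ arising from a factorisation $S^{2r}\to \Sigma^rB(r/2)\to S^0$ represents a decomposable element of $\pi_{2r}^s$, i.e.\ a sum of products of positive-degree stable classes. The input is the classical Mahowald/Snaith splitting of $\Sigma^\infty\Omega^2 S^m$ for an appropriate $m$, in which $\Sigma^rB(r/2)$ appears as a stable wedge summand. The composite $S^{2r}\to\Sigma^rB(r/2)\to S^0$ can then be identified, via the splitting, with an iterated James--Hopf invariant of a class in $\pi_{2r}^s(\Omega^2 S^m)$, giving the required product structure.

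Second, I would apply the aforementioned observation to conclude that the image $h(f)$ is zero unless $2r\in\{2,6,14\}$. Third, since $r$ is assumed even, $2r\in\{2,6,14\}$ would force $r\in\{1,3,7\}$, all of which are odd, a contradiction. Hence $2r\notin\{2,6,14\}$, and therefore $h(f)=0$.

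The main obstacle is the first step. Note that Theorem~\ref{main1} does not apply in this regime: the connectivity of $E=\Sigma^rB(r/2)$ is $r-1$, and the bound $n\leq 2(r-1)+1=2r-1$ in part (i) fails by exactly one dimension at $n=2r$, while the dimension hypothesis in part (ii) also fails. The argument must therefore bypass Theorem~\ref{main1} and proceed via the splittings above, tracking the Hopf--James invariants explicitly. This is where I would follow Lin's \cite{Lin-BG} analysis in detail, in particular using the universal property of Brown--Gitler spectra as representing objects related to mod-$2$ Dyer--Lashof operations to pin down the decomposability.
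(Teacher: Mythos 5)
There is a genuine gap, and it sits exactly where you flag it: your first step. You need every composite $S^{2r}\to\Sigma^rB(r/2)\to S^0$ to be a \emph{decomposable} element of $\pi_{2r}^s$, and nothing you cite delivers this. Factoring through a Snaith summand, or being expressible as a (stable iterated) James--Hopf invariant of a class in $\pi_{2r}^s\Omega^2S^m$, does not produce a product of two positive-degree elements of $\pi_*^s$: Hopf invariants are not products, and the universal property of Brown--Gitler spectra (representing a homology functor subject to the $\chi(Sq^n)$-annihilation condition) says nothing about decomposability of homotopy classes passing through them. Indeed the claim is implausibly strong: the theorem quantifies over \emph{all} maps $S^{2r}\to\Sigma^rB(r/2)$ and \emph{all} maps $\Sigma^rB(r/2)\to S^0$, and many elements known or expected to be indecomposable factor through such Snaith summands (Mahowald's $\eta_i$ factor through $D_{2,2^i-3}S^7$; Lin's elements detected by $h_2(h_{i+1}h_j^2+h_i^2h_{j+1})$ are constructed through $\Sigma^rB(r/2)$ in \cite{Lin-BG} and are not known to be products in $\pi_*^s$ --- detection by a product of Adams generators does not yield a product decomposition in homotopy). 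So reducing to Theorem \ref{main0} is not available by this route; if it were, you would in fact be proving a much stronger structural statement about $\pi_*^s$ than the theorem asserts. Your steps two and three are fine conditionally, and your observation that Theorem \ref{main1} misses by exactly one dimension is correct, but as it stands the argument does not close.

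For comparison, the paper bypasses decomposability altogether: using $\Sigma^\infty D_{2,r}S^1\simeq\Sigma^rB(r/2)$ \cite{BrownPeterson} and Freudenthal, the adjoint factors as $S^{2r}\to\Omega\Sigma D_{2,r}S^1\to QD_{2,r}S^1\to QS^0$, and one shows any map $S^{2r}\to\Omega\Sigma D_{2,r}S^1$ is trivial in homology. By James, $H_*\Omega\Sigma D_{2,r}S^1$ is the tensor algebra on $\widetilde{H}_*D_{2,r}S^1$, and a height-versus-dimension count in $H_*\Omega^2S^3\simeq\Z/2[x_1,x_3,x_7,\dots]$ shows $H_{2r}\Omega\Sigma D_{2,r}S^1\simeq\Z/2\{\overline{x_1^r}\otimes\overline{x_1^r}\}$; a spherical class there would force, by \cite[Proposition 5.8]{AsadiEccles}, detection of the stable adjoint by $Sq^{r+1}$ on the bottom cell of its mapping cone, which is impossible since $r$ even gives $Sq^{r+1}=Sq^1Sq^r$ while $H^rB(r/2)=0$ by \cite{BrownGitler}. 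If you want to salvage your outline, this is the kind of argument you need in place of the decomposability claim.
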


The following this is immediate.

\begin{crl}\label{Lin-BGfamily}
The elements of Lin's family in ${_2\pi_{2^{i+1}+2^{j+1}}^s}$ detected by $h_2(h_{i+1}h_j^2+h_i^2h_{j+1})$ in the ASS \cite{Lin-BG} map trivially under the unstable Hurewicz homomorphism.
\end{crl}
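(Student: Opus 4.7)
The plan is to deduce the corollary directly from Theorem \ref{BrownGitlervanishing} by exhibiting each element of Lin's family as a composite of the form required by that theorem, namely $S^{2r}\to\Sigma^rB(r/2)\to S^0$ with $r$ even. First, I would fix notation: set $n=2^{i+1}+2^{j+1}$ so that $n=2r$ with $r=2^i+2^j$, and observe that for the parameters in which Lin's family is constructed ($i,j\geqslant 1$) this $r$ is even. Thus the parity hypothesis of Theorem \ref{BrownGitlervanishing} is satisfied and only the existence of the required factorisation remains.

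The next and main step is to read off from Lin's construction in \cite{Lin-BG} that every element $f\in{_2\pi_n^s}$ detected by $h_2(h_{i+1}h_j^2+h_i^2h_{j+1})$ in the ASS admits a factorisation of the form $S^n\stackrel{\alpha}{\to}\Sigma^rB(r/2)\stackrel{c}{\to}S^0$, where $c$ is the canonical map from the suspended Brown--Gitler spectrum to the sphere. The elements in question are produced precisely by first constructing an unstable map into (a suspension of) the Brown--Gitler spectrum $B(r/2)$ whose composition with $c$ represents the desired Adams class, so the factorisation is built into the construction rather than having to be manufactured after the fact. I expect verifying that Lin's construction produces the factorisation in the exact shape required by Theorem \ref{BrownGitlervanishing} (and not merely something homotopic on the level of stable stems) to be the main technical obstacle; it is a matter of tracing through the definition of the generator and matching the suspensions and indices carefully.

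Once the factorisation $S^n\to\Sigma^rB(r/2)\to S^0$ is in hand, Theorem \ref{BrownGitlervanishing} applies with no further input and yields $h(f)=0$ for every element $f$ of the family, which is the statement of the corollary. No spectral sequence input beyond the identification of Lin's generators is needed, and in particular no analysis of differentials or permanent cycles is required at this stage.
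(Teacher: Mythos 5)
Your proposal is correct and follows essentially the same route as the paper: identify $r=2^i+2^j$ (so $2r=2^{i+1}+2^{j+1}$ is the degree and $r$ is even), cite Lin's construction of the family as a composite $S^{2r}\to\Sigma^rB(r/2)\to S^0$, and apply Theorem \ref{BrownGitlervanishing}. The only minor point is the parameter range: Lin's construction is for $4\leqslant i<j-1$, not merely $i,j\geqslant 1$, but this does not affect the parity argument or the conclusion.
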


\begin{proof}
For $i,j$ with $4\leqslant i<j-1$ Lin has constructed this family as a composition $S^{2r}\to \Sigma^rB(r/2)\to S^0$ where $r=2^i+2^j$. Since $r$ is even, then the result follows from \ref{BrownGitlervanishing}.
\end{proof}

Next, we consider the image of the unstable Hurewicz homomorphism when restricted to submodule of decomposable elements in $\pi_*^s$.

\begin{thm}\label{main0}
The image of $h:\pi_*^s\simeq{\pi_*}QS^0\to H_*(QS^0;\Z)$ when restricted to the submodule of decomposable elements is given by
$\Z\{h(\eta^2),h(\nu^2),h(\sigma^2)\}$.
\end{thm}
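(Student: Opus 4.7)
The plan is to split the problem according to whether the two factors of a decomposable product have the same degree, handling the unequal case by Theorem \ref{main1}(i) and the equal case by a Hopf invariant argument.

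Any decomposable element of $\pi_*^s$ is a $\Z$-linear combination of binary products $\alpha\beta$ with $\alpha\in\pi_m^s$, $\beta\in\pi_n^s$ of positive degrees. Such a product factors as
\[
S^{m+n}\stackrel{\Sigma^n\alpha}{\lra}S^n\stackrel{\beta}{\lra}S^0,
\]
so the associated map $S^{m+n}\to QS^0$ factors through $\Omega^\infty S^n$. The spectrum $S^n$ is an $(n-1)$-connected $CW$-spectrum with $H_{m+n}(S^n;\Z)=0$ for $m>0$, so the stable Hurewicz image of $\Sigma^n\alpha$ vanishes automatically. Theorem \ref{main1}(i) therefore applies whenever $m+n\leqslant 2(n-1)+1=2n-1$, i.e. whenever $m<n$, and yields $h(\alpha\beta)=0$. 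The symmetric factorisation through $\Omega^\infty S^m$ handles $n<m$. Only equal-degree products $m=n$ require further analysis.

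For $\alpha,\beta\in\pi_n^s$ the factorisation $S^{2n}\stackrel{\tilde\alpha}{\lra}\Omega^\infty S^n\stackrel{\Omega^\infty\beta}{\lra}QS^0$ gives $h(\alpha\beta)=(\Omega^\infty\beta)_*\bigl(h_{QS^n}(\tilde\alpha)\bigr)$, where $\tilde\alpha$ corresponds to $\alpha$ under $\pi_{2n}(QS^n)\cong\pi_n^s$. I would analyse $h_{QS^n}(\tilde\alpha)\in H_{2n}(QS^n;\Z)$ via the Snaith splitting of $\Sigma^\infty_+QS^n$: only the $k=2$ extended power $D_2S^n$ can contribute in total degree $2n$, and there the Hurewicz map records (up to sign and torsion conventions) the stable Hopf invariant of $\alpha$. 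By Adams' solution of the Hopf invariant one problem this invariant vanishes unless $n\in\{1,3,7\}$, in which case $\alpha$ and $\beta$ are forced (up to unit multiples) to lie in $\{\eta,\nu,\sigma\}$ respectively. The resulting nonzero contributions to the image of $h$ are then integer multiples of $h(\eta^2),h(\nu^2),h(\sigma^2)$, and these three classes are $\Z$-linearly independent for degree reasons, occupying degrees $2$, $6$ and $14$.

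The main obstacle is the equal-degree case: one must compute $h_{QS^n}(\tilde\alpha)\in H_{2n}(QS^n;\Z)$ carefully, paying attention to the orientation behaviour of the bundle underlying $D_2S^n$ (which distinguishes $n$ even from $n$ odd), trace the resulting class through $(\Omega^\infty\beta)_*$ to $H_{2n}(QS^0;\Z)$, and invoke Adams' theorem to eliminate all $n\notin\{1,3,7\}$. Once this identification is in hand the first paragraph reduces the problem to this special case and the final non-triviality/independence of $h(\eta^2),h(\nu^2),h(\sigma^2)$ is routine.
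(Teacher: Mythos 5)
The unequal-degree half of your argument is correct and complete, and it is essentially the paper's reduction in a dual form: the paper factors the product as $S^{i+j}\to S^j\to S^0$ and applies Theorem \ref{main1}(ii) (via $S$-duality), whereas you apply Theorem \ref{main1}(i) directly to the factorisation through $\Omega^\infty S^n$ of the larger-degree factor, where the stable Hurewicz image vanishes for degree reasons; both are valid and yours is marginally more economical. The reduction of a general decomposable element to binary products, and the final nontriviality of $h(\eta^2),h(\nu^2),h(\sigma^2)$, are handled the same way in the paper.

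The genuine gap is the equal-degree case, which is the heart of the theorem and which you explicitly defer (``the main obstacle \dots once this identification is in hand''). Two specific points. First, the identification of the weight-two Snaith component of $h_{QS^n}(\widetilde{\alpha})\in H_{2n}(QS^n;\Z)$ with a (stable) Hopf invariant of $\alpha$ is precisely the nontrivial content of this case, and it is asserted rather than proved. Second, the sentence ``by Adams' solution of the Hopf invariant one problem this invariant vanishes unless $n\in\{1,3,7\}$'' is not what Adams' theorem \cite{Adams-Hopfinv} gives: Adams excludes (mod $2$) Hopf invariant \emph{one} for $n\notin\{1,3,7\}$, while elements of even nonzero Hopf invariant exist for every odd $n$ (e.g.\ the Whitehead product $[\iota_{n+1},\iota_{n+1}]$ has Hopf invariant $2$), and you must rule out the possibility that such elements contribute a nonzero class. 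For $n$ odd this amounts to showing that only the mod $2$ invariant is visible in $H_{2n}(D_2S^n;\Z)\cong\Z/2$, i.e.\ to proving the deferred identification; for $n$ even you need the separate fact that the relevant integral invariant vanishes identically. The paper does exactly this work, unstably: by Freudenthal the class lifts to $f'\in\pi_{2n}\Omega S^{n+1}$, where $H_{2n}(\Omega S^{n+1};\Z)=\Z\{x_n^2\}$ and $h(f')$ is the unstable Hopf invariant times $x_n^2$ \cite{Harper}, forcing $n$ odd; an even Hopf invariant is then excluded by replacing the lift $f'$ by $f'-l[\iota_{n+1},\iota_{n+1}]$, which represents the same stable class but has zero Hopf invariant and hence zero Hurewicz image, a contradiction; only then is Adams invoked. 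Your stable plan can be completed along these lines (for instance by proving the detection statement through the second James--Hopf map, in the spirit of the results of \cite{AsadiEccles} that the paper uses elsewhere), but as written the central step is missing, not merely routine.
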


{The integral result, immediately, implies the $p$-local version; the image of $h:{_p\pi_*^s}\simeq{_p\pi_*}QS^0\to H_*(QS^0;\Z/p)$ when restricted to the submodule of decomposable elements is given by the $\Z/p$-vector space $\Z/p\{h(\eta^2),h(\nu^2),h(\sigma^2)\}$. At the prime $p=2$,} the above theorem has a variant on the level of ASS due to Hung and Peterson \cite[Proposition 5.4]{HungPeterson} as follows. If $\varphi_k:\mathrm{Ext}^{k,k+i}_A(\Z/2,\Z/2)\to (\Z/2\otimes D_k)_i^*$ denotes the Lannes-Zarati homomorphism, then ${\varphi:=\oplus_k\varphi_k}$ does vanish on decomposable classes when $k>2$; here the authors take ${\varphi}$ as an algebraic approximation to the unstable Hurewicz homomorphism which seems to imply our Theorem \ref{main0} at the prime $2$. However, the relation between ${\varphi}$ and $h$ is not of that linear type. After first preprint versions of this work was posted on arxiv, Nick Kuhn kindly pointed out that, theoretically, it is possible to have the vanishing of the Lannes-Zarati homomorphism on a permanent cycle $c$ converging to an element $f$ with $h(f)\neq 0$ (see also \cite{Hung-erratum}). So, for the purpose of verifying Curtis conjecture, our result seems to provide a stronger evidence, and our proof is more geometric and quick.\\

As an application of this latter observation, we provide a computation of spherical classes in certain finite loop spaces on spheres. Let's first start with a conjecture due to Eccles to which our next results are related.

\begin{conj}\label{Ecclesconj}
(\textrm{Eccles conjecture}) Let $X$ be a path connected $CW$-complex with finitely generated homology. For $n>0$, suppose $h(f)\neq 0$ where ${_2\pi_n^s}X\simeq{_2\pi_n}QX\to H_*QX$ is the unstable Hurewicz homomorphism. Then, the stable adjoint of $f$ either is detected by homology or is detected by a primary operation in its mapping cone.
\end{conj}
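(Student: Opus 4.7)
The plan is to treat the conjecture as genuinely open and describe a Snaith--splitting strategy that extends the paper's techniques for $X=S^n$ to a general finite CW-complex. First I would reduce to the case where $X$ is a finite CW-complex, using that $H_*X$ is finitely generated together with the fact that any map $S^n\to QX$ factors through $QX'$ for some finite subcomplex $X'\subseteq X$. Write $\alpha\in{_2\pi_n^s}X$ for the stable adjoint of $f$; the conclusion asks that either the stable Hurewicz image $h^s(\alpha)\in H_*X$ is nonzero (``detected by homology''), or some primary Steenrod operation links the bottom cell to the top cell of the mapping cone $C\alpha$.

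My first substantive step would be to use the Snaith splitting $\Sigma^\infty QX\simeq\bigvee_{k\geqslant 1}D_kX$, where $D_kX=(E\Sigma_k)_+\wedge_{\Sigma_k}X^{\wedge k}$, and the induced weight filtration on $H_*QX$. The primitive class $h(f)\in PH_*QX$ has a well-defined lowest weight $k_0\geqslant 1$. If $k_0=1$ the projection to $H_*D_1X=H_*X$ realises $h^s(\alpha)\neq 0$, giving the first alternative. One may therefore assume $h^s(\alpha)=0$ and $k_0\geqslant 2$. Combining this with Proposition \ref{equiv1} applied to $E=\Sigma^\infty X$ shows that, whenever $X$ is $r$-connected with $n\leqslant 2r+1$, the map $\epsilon$ is a weak $(2r+1)$-equivalence and the unstable and stable Hurewicz homomorphisms agree in degree $n$, so the conjecture is automatic in the metastable range.

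The next step would address the non-metastable range by the Dyer--Lashof structure. Since $H_*QX$ is a free commutative algebra on Dyer--Lashof monomials $Q^I x$ with $x$ running over a basis of $H_*X$, and since $h(f)$ is primitive, Theorem \ref{main0} can be extended to rule out decomposable primitives except those coming from $\eta^2,\nu^2,\sigma^2$ tensored against classes in $H_*X$; these latter cases reduce to Curtis' conjecture, i.e.\ to $X=S^0$. One is then left with $h(f)$ a nonzero indecomposable primitive of weight $k_0\geqslant 2$, which by the standard basis theorem is forced to be an admissible $Q^I x$ of excess strictly less than the degree of $x$, for a specific $x\in H_*X$ dictated by the Snaith projection.

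The final and hardest step is to translate such a Dyer--Lashof datum into a primary Steenrod operation in $H^*(C\alpha)$. For $X=S^n$ the fundamental class is the only available $x$ and one recovers the classical Hopf-invariant/Kervaire-invariant dichotomy; in general one would use the Thom-isomorphism description of $D_kX$ together with a Kahn--Priddy-style transfer from $D_kX$ to $X$ to convert the admissible operation $Q^I$ dual to a concrete $\mr{Sq}^J$ relating the two cells of $C\alpha$. The main obstacle is precisely here: when $H_*X$ has multiple generators in distinct degrees, the Nishida relations couple different $Q^Ix$ with different $x$, so extracting a single primary operation in the two-cell complex $C\alpha$ appears to require new input, likely the full secondary-operation machinery underlying the Curtis conjecture itself. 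For this reason I would expect that any complete proof of the conjecture as stated proceeds through, rather than around, Conjecture \ref{conjecture}.
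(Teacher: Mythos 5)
The statement you were asked about is a \emph{conjecture} (Eccles' conjecture), and the paper contains no proof of it: it only proves special cases (Theorem \ref{main} and its corollaries, for $X$ a sphere and for finite loop spaces), so there is no ``paper proof'' to compare against, and you were right to treat the statement as open. Your proposal is therefore not a proof, and by your own admission it cannot be one: the decisive step --- converting the Dyer--Lashof datum ``$h(f)$ is an indecomposable primitive $Q^Ix$ of weight $k_0\geqslant 2$'' into a primary Steenrod operation connecting the two cells of the mapping cone $C\alpha$ --- is exactly where you stop and say new input is needed. That concession is the whole content of the conjecture; everything before it (reduction to a finite subcomplex, the weight filtration from the Snaith splitting, the identification of the weight-one projection with the stable suspension $H_*QX\to H_*X$, and the metastable-range observation via Proposition \ref{equiv1}) is correct but standard, and in the metastable case it reproduces the mechanism of Theorem \ref{main1}(i). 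For $X=S^0$ the translation you want is the Asadi-Golmankhaneh--Eccles result the paper cites (a spherical class equal to a square forces detection by $Sq^{n+1}$ in the mapping cone), but no analogue for general $X$ with several homology generators is established here or in your sketch.

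Two further points in the body of your argument are genuinely flawed, not just incomplete. First, your appeal to ``extending Theorem \ref{main0}'' conflates decomposability in the ring $\pi_*^s$ with decomposability of the class $h(f)$ in the Pontrjagin ring $H_*QX$: Theorem \ref{main0} computes the Hurewicz image of \emph{products of homotopy elements} in $\pi_*^s$ landing in $H_*(QS^0;\Z)$, and gives no control over an indecomposable $\alpha\in{_2\pi_n^s}X$ whose Hurewicz image happens to be a decomposable (e.g.\ a square of a primitive) in $H_*QX$; the asserted reduction of those cases to Conjecture \ref{conjecture} is not an argument. Second, your excess condition is backwards: in the standard basis theorem $H_*(QX;\Z/2)$ is polynomial on admissible $Q^Ix$ with excess $e(I)$ strictly \emph{greater} than $|x|$ (excess equal to $|x|$ yields squares, hence decomposables), whereas you require excess strictly less than $|x|$, which would force $Q^Ix=0$. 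In summary: the proposal is a reasonable research outline consistent with the paper's evidence-gathering strategy, but it proves nothing beyond the metastable case already covered by Proposition \ref{equiv1} and Theorem \ref{main1}, and its two substantive intermediate claims as stated are respectively unjustified and incorrect.
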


Note that the stable adjoint of $f$ being detected by homology means that $h(f)\in H_*QX$ is stably spherical, i.e. it survives under homology suspension $H_*QX\to H_*X$ induced by the evaluation map $\Sigma^\infty QX\to \Sigma^\infty X$ given by the {stable} adjoint of the identity $QX\to QX$.

The above conjectures make predictions about the image of spherical classes in the image of unstable Hurewicz homomorphism $h:{_2\pi_*^s}X\simeq{_2\pi_*}QX\to H_*QX$. By Freudenthal suspension theorem, for any $f\in\pi_nQX$, depending on the connectivity of $X$, we may find some nonnegative integer $i$ so that $f$ does pull back to $\pi_n\Omega^i\Sigma^iX$. {Note that there exist obvious commutative diagrams
$$\xymatrix{
\pi_n\Omega^i\Sigma^iX\ar[r]^-h\ar[d] & H_n\Omega^i\Sigma^iX\ar[d]\\
\pi_nQX\ar[r]^-h                      & H_nQX.}$$
So it is then natural to look for spherical classes in $H_*\Omega^i\Sigma^iX$. Our next result consider the special case of $X$ being a sphere. For various values of $d$ and $k$, the spaces $\Omega^dS^{d+k}$ determine a lattice with positive integer coordinates. It is easier to state our next result in terms of this lattice. We have the following.}

\begin{thm}\label{main}
Consider the following lattice where $(a,b)$ corresponds to $H_*\Omega^aS^b$
$$\xymatrix{
(1,15)\ar@{.}[ddd]\ar@{.>}[ddddrrrrrr]  \\
\\
\\
(1,9)\ar@{.}[d]\\
(1,8)\ar@{.>}[r]\ar@{.}[d]\ar@{.>}[rd] & (2,9)\ar@{.>}[rrrrr] & & & & & (8,15)\\
(1,7)\ar@{.>}[dddrrrr]\ar@{.}[ddd]     & (2,8)\ar@{.>}[rrrrddd]\\
\\
\\
(1,4)\ar@{.>}[r]\ar@{.}[d]\ar@{.>}[ddrr]          & (2,5)\ar@{.>}[rrr]&&& (4,7)\ar@{.>}[r] & (5,8)\\
(1,3)\ar@{.>}[rd]\ar@{.}[d]\\
(1,2)\ar@{.>}[r]\ar@{.}[d]& (2,3)\ar@{.>}[r] & (3,4)\\
(1,1)\ar@{.>}[r]          & (2,2)
}$$
The following table completely determines the spherical classes in the above lattice.
\begin{center}
\begin{tabular}{|c|c|c|c|c|c|c|c}
\hline
$b-a$ & $a$             & $b$              & \textrm{spherical classes arise from}\\
\hline
$0$   & $2$             & $2$              & $\eta,\eta^2$\\
\hline
$1$   & $2$             & $3$              & $\eta,b$\\
\hline
$1$   & $1$             & $2$              & $\eta,b$\\
\hline
$1$   & $3$             & $4$              & $\nu,\eta,b$\\
\hline
$2$   & $1$             & $3$              & $b$\\
\hline
$3$   & $\leqslant 4$   & $a+3$            & $\nu,b$\\
\hline
$3$   & $5$             & $8$              & $\sigma,\nu,b$\\
\hline
$7-a$ & $<4$            & $7$              & $b$\\
\hline
$7$   & $\leqslant 8$   & $a+7$            & $\sigma,b$\\
\hline
$15-a$& $<8$            & $15$             & $b$\\
\hline
$4-a$ & $<3$            & $4$              & $\nu,b$\\
\hline
$8-a$ & $<5$            & $8$              & $\sigma,b$\\
\hline
\textrm{other cases}    &    &                   & $b$\\
\hline
\end{tabular}
\end{center}
Here, $b$ corresponds to the inclusion of the bottom cell in the related loop space, that is $b:S^k\to \Omega^dS^{d+k}$. The diagonal arrows correspond to (iterated) adjointing down, and horizontal arrows correspond to (iterated) stablisation map $E$. Both of this operations, preserve spherical classes.
\end{thm}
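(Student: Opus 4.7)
The plan is to reduce computing spherical classes in $H_*\Omega^aS^b$ to an analysis of $H_*QS^{k}$ with $k=b-a$, and then invoke Theorem~\ref{main0} together with classical low-dimensional data. Setting $k=b-a$, the space $\Omega^aS^b$ is $(k-1)$-connected, and I would iteratively apply Freudenthal (i.e.\ Proposition~\ref{equiv1} stage-by-stage along the stabilization tower) to conclude that $E^\infty:\Omega^aS^b\to QS^k$ is a high-equivalence in the bounded range of dimensions relevant to the table. Consequently, every spherical class in $H_n\Omega^aS^b$ pushes forward to a spherical class in $H_nQS^k$ which, under the identification $\pi_nQS^k\cong\pi_{n-k}^s$, arises from an element of $\pi_{n-k}^s$; conversely, a spherical class in $H_*QS^k$ is realized in $H_*\Omega^aS^b$ exactly when the representing stable element admits an unstable lift to $\pi_{*+a}S^b$.

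I would then classify spherical classes in $H_*QS^k$ by separating indecomposable and decomposable parts. Since $H_*QS^k$ is polynomial on the Kudo--Araki--Dyer--Lashof generators $Q^I\iota_k$ (mod~$2$), any primitive spherical class must be the Hurewicz image of an indecomposable element of $\pi_*^s$; in the finite range of stems at issue, direct inspection together with the classical Hopf-invariant-one theorem identifies these as $\iota_k$ itself (giving the bottom cell $b$) and $\eta,\nu,\sigma\in\pi_{1,3,7}^s$. For decomposables, I would apply Theorem~\ref{main0} transported to $QS^k$ via the smash-product pairing $QS^0\wedge S^k\to QS^k$, which restricts the decomposable spherical classes to $\eta^2,\nu^2,\sigma^2$.

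The remaining work is a case-by-case verification of the table. For each $(a,b)$, one checks (i) that the listed generators arise as Hurewicz images of elements of $\pi_{*+a}S^b$ using the classical unstable representatives---the Hopf fibrations $S^{2n-1}\to S^n$ for $n=2,4,8$, and the lifts of $\eta,\nu,\sigma$ in $\pi_{b+1}S^b,\pi_{b+3}S^b,\pi_{b+7}S^b$ when $b$ lies in the appropriate stable range---and (ii) that no further spherical class exists. A key tool for (ii) is the vanishing $H_nQS^k=0$ for $k<n<2k$, which immediately forces the ``$b$ only'' rows such as $\Omega^aS^7$ with $a<4$ and $\Omega^aS^{15}$ with $a<8$: in these dimension ranges no nonzero homology class, decomposable or primitive, is available.

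The main obstacle will be this case analysis. Although the relevant ranges are finite, verifying each row requires coordinating the structure of $\pi_n^s$, the unstable lifts of its classes into $\pi_{n+a}S^b$, and the detailed homology $H_*QS^k$ in the matching dimension. The subtlest point is ruling out an ``unstable'' spherical class in $H_*\Omega^aS^b$ that disappears upon stabilization; here the Freudenthal-based injectivity of $H_*\Omega^aS^b\to H_*QS^k$ in the relevant range is essential, and it is this injectivity, combined with Theorem~\ref{main0} and the classical Hopf-invariant-one classification, that closes the argument.
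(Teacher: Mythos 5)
Your reduction runs in the wrong direction to be usable: you propose to push a spherical class of $H_*\Omega^aS^b$ forward along $E^\infty$ into $H_*QS^k$, $k=b-a$ (which is fine, since $E^\infty_*$ is injective in homology), and then to \emph{classify} spherical classes in $H_*QS^k$ by ``separating indecomposables and decomposables'' and ``direct inspection in the finite range of stems at issue.'' But there is no finite range: the table must exclude spherical classes in \emph{all} dimensions of $H_*\Omega^aS^b$, so your inspection would have to decide, for every $n$, whether any (indecomposable) element of $\pi_n^s$ has nonzero Hurewicz image in $H_*QS^k$. Theorem \ref{main0} only controls decomposable elements of $\pi_*^s$, and the Hopf-invariant-one theorem only controls the very specific classes detected by an unstable Hopf invariant; for general indecomposables the question is exactly the content of Conjectures \ref{conjecture} and \ref{Ecclesconj}, which are open. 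The case $(a,b)=(2,2)$ makes the circularity explicit: there $k=0$, and your scheme needs the spherical classes of $H_*Q_0S^0$, i.e.\ the Curtis conjecture itself. The observation $H_nQS^k=0$ for $k<n<2k$ only clears the band below dimension $2k$ and says nothing about the infinitely many higher dimensions where $H_*QS^k$ (and $H_*\Omega^aS^b$) is nonzero. Also note the paper's own warning that knowing spherical classes in the finite loop spaces does \emph{not} determine them after further stabilisation; your argument implicitly assumes the converse determination, which is the hard, unknown part.

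The missing idea, and the heart of the paper's proof, is to use the Hopf fibrations to convert an arbitrary spherical class of the finite loop space into the Hurewicz image of a \emph{decomposable} element of $\pi_*^s$ in $H_*QS^0$, where Theorem \ref{main0} applies in every dimension at once. Concretely: $\Omega^2\eta:\Omega^2S^3\to\Omega^2_0S^2$ is a homology isomorphism, and the splittings $\Omega S^4\simeq S^3\times\Omega S^{7}$ and $\Omega S^8\simeq S^7\times\Omega S^{15}$ (coming from $\nu$ and $\sigma$ having Hopf invariant one) make $\Omega^4\nu$ and $\Omega^8\sigma$ injective in homology; so if $f:S^n\to\Omega^2S^3$ (resp.\ $\Omega^4S^7$, $\Omega^8S^{15}$) is nonzero in homology with $n$ above the bottom cell, then $h(\eta\cdot E^\infty f)\neq0$ (resp.\ $h(\nu\cdot E^\infty f)$, $h(\sigma\cdot E^\infty f)$) in $H_*QS^0$, and Theorem \ref{main0} forces the dimension, leaving only $\eta,\nu,\sigma$ and the bottom cell as possibilities; the candidates are then identified inside the small known groups $\pi_4S^3$, $\pi_{10}S^7$, $\pi_{22}S^{15}$. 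The remaining entries of the table are obtained by adjointing down (a spherical class upstairs must survive homology suspension, and the candidate classes downstairs are decomposable in the Pontrjagin ring, hence are killed by $\sigma_*$) and by the stabilisation maps $E$, which are monomorphisms in homology. None of these mechanisms appears in your proposal, and without them the case analysis you describe cannot be closed.
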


The stablisation map $\Omega^dS^{d+k}\to \Omega^{d+l}S^{d+k+l}$ induces a monomorphism in homology, hence the spherical classes computed in the above theorem survive under this map. However, having determined spherical classes in $H_*\Omega^dS^{d+k}$ does not completely determine spherical classes in $H_*\Omega^{d+l}S^{d+k+l}$; for some more discussions see Corollary \ref{Curtis-1}.\\

The techniques that we employ in this paper, mainly geometric. In a sequel, we have used homology of James Hopf maps to completely eliminate spherical classes in $H_*\Omega^dS^{n+d}$ with $n>0$ and $d<4$ \cite{Zare-Els-1}.\\

\tb{Completion vs. localisation.} Often, we deal with maps $S^n\to QS^0$ with $n>0$. So, we may safely replace $QS^0$ with its basepoint component $Q_0S^0$. The space $Q_0S^0$ is a path connected infinite loop space with $\pi_iQ_0S^0\simeq\pi_i^s$ being finite Abelian groups. So, by generalities on localisation and completion at a prime $p$ \cite{BousfieldKan}, noting that completion at $p$ is just $\Z_p$-localisation, the effect on either localisation or completion at $p$ on homology will be the same.\\

\tb{Acknowledgements.} I am grateful to Nick Kuhn for his comments and notes on earlier versions of this document. I am grateful to an anonymous referee for his/her helpful comments; in particular for suggesting a simple proof of Proposition \ref{equiv1} as quoted, as well as Theorem \ref{main1} which allowed quick proof of some of results in this paper; in earlier versions of this paper we used Kuhn's generalised Kahn-Priddy theorem to derive Proposition \ref{equiv1} $p$-locally where the current version of this proposition offers an integral equivalence. This latter also allowed to prove some results of \cite{Za-ideal} for a wider range of spectra and not only suspension spectra.

\section{Freudenthal Theorem and the unstable Hurewicz homomorphism}
In this section, we prove Theorem \ref{main1}. We fix some notation that is used below. We write $h^s:\pi_*E\to H_*E$ for the {(stable)} Hurewicz homomorphism, {that is the Hurewicz homomorphism on the spectrum level,} and $h:\pi_*E\simeq\pi_*\Omega^\infty E\to H_*\Omega^\infty E$ for the unstable Hurewicz homomorphism which {satisfy} $h^s=\epsilon_*\circ h$.

\begin{proof}[{Proof of Theorem \ref{main1}}]
(i) If $h(f)\neq 0$ then $(\Sigma^\infty f)_*\neq 0$, i.e. it maps nontrivially under the Hurewicz map $h^{{s}}:\pi_*\Sigma^\infty QS^0\to H_*\Sigma^\infty QS^0$ is nonzero in homology. Consequently $(\Sigma^\infty f_E)_*\neq 0$. On the other hand, there is a commutative diagram
$$\xymatrix{
\pi_n\Sigma^\infty\Omega^\infty E\ar[r]^-{\epsilon_*}\ar[d]_-{h^s}  & \pi_nE\ar[d]^-{h^s}\\
H_n\Sigma^\infty\Omega^\infty E\ar[r]^-{\epsilon_*} & H_nE
}$$
where the rows are isomorphisms. However, this leads to a contradiction as
$$0\neq \epsilon_*h^s(\Sigma^\infty f_E)=h^s\epsilon_*(\Sigma^\infty f_E)=0.$$
This completes the proof {in the integral case. The $p$-local version follows, with analogous techniques, replacing $E$ with its $p$-local version $E_{(p)}$, together with the following facts: (1) the $(2r+1)$-equivalence $\epsilon:\Sigma^\infty \Omega^\infty E\to E$ of Theorem \ref{equiv1} is an integral one, so it holds after $p$-localisation as well; (2) $\Omega^\infty$ and localisation functors commute
\cite[Theorem 1.1]{Bousfield-Klocalisation}; (3) infinite loop spaces are nilpotent in the sense of Bousfield, so both of homotopy and homology functors commute with localisation \cite[Chapter V, Theorem 3.1]{BousfieldKan}. The proof in the $p$-complete setting is similar.}\\
(ii) Write $\widetilde{f}:S^n\stackrel{\widetilde{f_E}}{\lra} E\stackrel{c}{\lra} S^0$ for the adjoint of $f$. Apply the $S$-duality functor $D$ to obtain
$$D(\widetilde{f}):S^n\stackrel{\Sigma^n D(c)}{\lra} \Sigma^nD(E)\stackrel{\Sigma^nD(\widetilde{f_E})}{\lra}S^0.$$
The result now follows from part (i) and the fact that the elements of $\pi_*^s$ are self $S$-dual. Note that {as $E$ and $S^0$ are finite CW-spectra, then $c_*=0$ if and only if ${D(c)}_*=0$, or equivalently $h(D(c))=0$,} by \cite[Lemma A.3]{Kuhn-construction}.
\end{proof}

\section{Hurewicz Image of some infinite families}
In this section, we compute image of some infinite families under the unstable Hurewicz homomorphism. Most of these follow immediately from Theorem \ref{main1}. As an example where Theorem \ref{main1} does not apply, we will compute image of certain family in ${_2\pi_{2^{i+1}+2^{j+1}}^s}$ due to Lin \cite{Lin-BG}.

\subsection{Snaith splitting and related spectra:Proof of Theorem \ref{infinitefamily}}
Recall that for a path connected space $X$, the space $\Omega^k\Sigma^kX$ is filtered by certain spaces $C^k_n(X)$ \cite{May-G}. By Snaith there is an equivalence of suspension spectra
\begin{equation}\label{snaithsplitting}
\Sigma^\infty\Omega^k\Sigma^kX\simeq\bigvee_{n=1}^{+\infty}\Sigma^\infty D_{k,n}X
\end{equation}
where $D_{k,n}X$ is the cofibre of $C^k_{n-1}(X)\to C^k_n(X)$ given by $F(\R^k,n)\ltimes_{\Sigma_n}X^{\wedge n}$ \cite{Snaith}. By construction, $D_{k,n}X$ is a genuine space for any $k$ and $n$ which has its bottom cell in dimension $rn$ if $X$ has its bottom cell in dimension $r$. For a prime $p$ and $t\in\{0,1\}$, following Hunter and Kuhn \cite{HunterKuhn}, at the prime $p$ define
$$T(2n+t)=\Sigma^{2pn+2t}D(D_{2,pn+t}S^1)$$
where $D(-)$ denotes the $S$-duality functor. For $S^1$ having its bottom cell in dimension $1$, the spectrum $T(2n+t)$ has its top cell in dimension $pn+t$. These spectra, at least on homological level, are related to Brown-Gitler spectra. For instance, at the prime $p=2$, there is a homotopy equivalence $\Sigma^\infty D_{2,n}S^1\to\Sigma^nB([n/2])$ where $[-]$ is the integer-part function \cite[Theorem B]{BrownPeterson}. We are now able to prove Theorem \ref{infinitefamily}.

\begin{proof}[Proof of Theorem \ref{infinitefamily}]
First, let $p=2$. Then,
\begin{itemize}
\item For $i\geqslant 3$, $\eta_i$ is constructed as a composition $S^{2^i}\to D_{2,2^i-3}S^7\to S^0$ \cite{Ma} (working in $2$-complete session, we may construct $\eta_i$ as a certain composition $S^{2^i}\to\R P^{2^{i-3}}\to S^0$ \cite[Remark 2.2 and Theorem 4.1]{Kuhn-construction} or as a composition $S^{2^i}\stackrel{}{\to}T(2^{i-3})\to S^0$ \cite[Proposition 2.1(1), Theorem 3.4]{HunterKuhn});
\item Lin's family of elements detected by $c_1h_i$ is constructed as a composition $S^{2^i+18}\to\Sigma^{2^i}P_1^4\to S^0$ (working in $2$-complete session y
      construct these as a composition $S^{2^i+18}\to \R P^{2^{i-3}}\to S^0$ \cite[Section 6]{Kuhn-construction});
\item For $i>11$, Lin's family of elements for $i>11$ is as a composition
$$S^{2^{i+1}+37}\stackrel{\{\overline{d_1}\}}{\longrightarrow}\Sigma^{2^{i+1}+1}\C P^2\to S^0.$$
which is detected by $h_i^2h_3d_1$ in the Adams spectral sequence \cite{Lin-7} (see also \cite[Theorem 7.3]{HunterKuhn})
\item working $2$-complete $\tau_{i-1}$ can be constructed as a composition $S^{2^{i}+1}\stackrel{}{\to}\Sigma T(2^{i-3})\to S^0$
\cite[Proposition 2.1(1), Theorem 3.6(1)(b)]{HunterKuhn}.\\
\end{itemize}
Taking either of the compositions for the afore mentioned families, the dimensional reasons together with Theorem \ref{main1} show that these elements map trivially under $h:{_2\pi_*^s}\simeq{_2\pi_*}QS^0\to H_*QS^0$.\\
When $p$ is odd, appealing to \cite[Proposition 2.1(2), Theorem 3.6(2)(b)]{HunterKuhn}, the result follows by a similar reasoning.
\end{proof}

Let's note that it is possible to compute $h(\tau_i)$ using the relation between the Hurewicz homomorphism and homotopy operations arising from ${_2\pi_*^s}D_2S^n$ as done in \cite[Proposition 0]{Za-ideal} which does not need $2$-completion.

\subsection{Factorisation through certain $B(n)$ spectra}
This section is devoted to the proof of Theorem \ref{BrownGitlervanishing}.\\

It is known that, at the prime $p=2$, there is a homotopy equivalence $\Sigma^\infty D_{2,n}S^1\to\Sigma^nB([n/2])$ where $[-]$ is the integer-part function
\cite[Theorem B]{BrownPeterson}. Here, the spaces $D_{2,n}S^1$ are stable summands of  Snaith splitting (\ref{snaithsplitting}) for $\Omega^2S^3$. The composition $S^{2r}\to \Sigma^rB(r/2)\to S^0$ of maybe written as
$$S^{2r}\to \Sigma^\infty D_{2,r}S^1\to S^0$$
with the map on the left belonging to ${_2\pi_{2r}^s}D_{2,r}S^1$. By construction, the complex $D_{2,r}S^1=F(\R^2,r)\ltimes_{\Sigma_r}(S^1)^{\wedge r}$ is a genuine complex with its bottom cell in dimension $r$. By Freudenthal theorem, the stable adjoint of the above composition admits a factorisation as
$$S^{2r}\to\Omega\Sigma D_{2,r}S^1\to QD_{2,r}S^1\to QS^0$$
where the pull back $S^{2r}\to\Omega\Sigma D_{2,r}S^1$ is not necessarily unique. The result now follows from the following lemma.

\begin{lmm}
For any map $f:S^{2r}\to\Omega\Sigma D_{2,r}S^1$ we have $f_*=0$.
\end{lmm}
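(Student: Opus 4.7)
My plan is to reduce the statement to a Hopf-invariant-type parity argument, after first isolating the unique potentially nontrivial class.

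First, I would compute $H_{2r}(\Omega\Sigma D_{2,r}S^1;\mathbb{F}_2)$ via the stable James--Milnor splitting
\[\Sigma^\infty\Omega\Sigma Y\;\simeq\;\bigvee_{k\geq 1}\Sigma^\infty Y^{\wedge k}\]
applied to $Y=D_{2,r}S^1$. Because $Y$ is $(r-1)$-connected (bottom cell in dimension $r$), the summands with $k\geq 3$ contribute nothing in total degree $2r$. Invoking the Brown--Peterson equivalence $\Sigma^\infty D_{2,r}S^1\simeq\Sigma^r B(r/2)$ (valid for $r$ even) together with the dimensional vanishing $H_i(B(n);\mathbb{F}_2)=0$ for $i>n$, I obtain $\tilde H_{2r}(Y;\mathbb{F}_2)=H_r(B(r/2);\mathbb{F}_2)=0$, while $\tilde H_r(Y;\mathbb{F}_2)=H_0(B(r/2);\mathbb{F}_2)=\mathbb{F}_2\{u\}$. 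Kunneth then gives $\tilde H_{2r}(Y^{\wedge 2};\mathbb{F}_2)=\mathbb{F}_2\{u\otimes u\}$, and hence
\[H_{2r}(\Omega\Sigma D_{2,r}S^1;\mathbb{F}_2)=\mathbb{F}_2\{u^2\},\]
where $u^2$ is the class corresponding to $u\otimes u$ under the splitting, equivalently the Pontryagin square of $u$ under the James identification $H_*(\Omega\Sigma Y;\mathbb{F}_2)\cong T(\tilde H_*(Y;\mathbb{F}_2))$.

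The core of the argument is then to show $u^2$ is not in the image of any Hurewicz map from a $2r$-sphere. The plan is to use the second James--Hopf map $j_2:\Omega\Sigma Y\to\Omega\Sigma(Y^{\wedge 2})$, which realises the projection onto the $k=2$ summand of the stable splitting; in homology, $(j_2)_*$ carries $u^2$ isomorphically onto the bottom-cell class of $\Omega\Sigma(Y^{\wedge 2})$. If a putative $f:S^{2r}\to\Omega\Sigma Y$ had $f_*[S^{2r}]=u^2$, then $j_2\circ f$ would represent a class in $\pi_{2r}(\Omega\Sigma(Y^{\wedge 2}))$ whose Hurewicz image is the mod-$2$ generator at the bottom cell, i.e. the image of $j_2$ in $\pi_{2r}$ would not lie in the kernel of reduction mod $2$.

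The hardest step, which I expect to be the main obstacle, is establishing the required parity: that for any $f$, the element $j_2\circ f$ lies in the even part of the bottom-cell group $\pi_{2r}(\Omega\Sigma(Y^{\wedge 2}))\cong H_{2r}(Y^{\wedge 2};\mathbb{Z})$. I would proceed by naturality against the bottom-cell inclusion $b:S^r\to D_{2,r}S^1$. This gives a commutative diagram comparing $j_2$ on $\Omega\Sigma D_{2,r}S^1$ with the classical James--Hopf $j_2:\Omega S^{r+1}\to\Omega S^{2r+1}$; on $\pi_{2r}$ the latter is the usual Hopf invariant $H:\pi_{2r+1}(S^{r+1})\to\mathbb{Z}$. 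Adams's Hopf-invariant-one theorem forces $H$ to take values in $2\mathbb{Z}$ when $r$ is even, since such an $r$ is never $1$, $3$, or $7$. Transporting this constraint along $b$ — and using that the higher-cell contributions to $(j_2\circ f)_*$ vanish for dimensional reasons, as verified by the Kunneth computation above — forces $(j_2\circ f)_*=0\pmod 2$, and therefore $f_*=0$ as required.
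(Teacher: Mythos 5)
Your opening computation is fine and, modulo phrasing, agrees with the paper: the paper identifies $H_{2r}\Omega\Sigma D_{2,r}S^1$ with the single square class via the height filtration of $H_*\Omega^2S^3$, while you get the same answer from the Brown--Peterson equivalence and the James/K\"unneth description; both routes ultimately use the vanishing $H^{r}(B(r/2);\F_2)=0$. One small correction there: your quoted range ``$H_i(B(n);\F_2)=0$ for $i>n$'' is not correct --- $B(n)$ has cells up to dimension $2n-\alpha(n)$, so homology is generally nonzero well above degree $n$; the instance you actually need, $H_{2n}(B(n);\F_2)=0$ for $n=r/2$, does hold because $\alpha(n)\geqslant 1$. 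Likewise your reduction via the second James--Hopf map is a legitimate reformulation of the paper's first move (a nonzero $f_*$ forces the Hurewicz image to be the square class, hence a nonzero ``quadratic'' Hopf invariant for the stable adjoint).

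The genuine gap is exactly the step you flag as hardest, and the mechanism you propose for it does not work. Naturality of $j_2$ with respect to the bottom-cell inclusion $b:S^r\to D_{2,r}S^1$ only constrains composites $S^{2r}\to\Omega S^{r+1}\stackrel{\Omega\Sigma b}{\lra}\Omega\Sigma D_{2,r}S^1$; your $f$ has no reason to factor through $\Omega S^{r+1}$, since $b$ is only $r$-connected (the next cells of $D_{2,r}S^1$ are in dimension $r+1$), far below degree $2r$, and there is no retraction $D_{2,r}S^1\to S^r$ to transport the constraint the other way because the bottom class supports nontrivial Steenrod operations. So Adams's theorem about $\pi_{2r+1}S^{r+1}$ says nothing about $j_2\circ f$. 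Worse, the ``parity'' statement cannot be extracted from the bottom cell at all: since $Sq^1$ is nonzero on the bottom class of $B(r/2)$, the group $H_r(D_{2,r}S^1;\Z)$ is (2-locally) $\Z/2$, so $\pi_{2r}\Omega\Sigma\bigl((D_{2,r}S^1)^{\wedge 2}\bigr)$ is a $\Z/2$ whose generator already has nonzero mod $2$ Hurewicz image --- there is no ``even part'' to land in, and no contradiction follows merely from $(j_2\circ f)_*\neq 0$. What is needed is an obstruction that uses the cells of $D_{2,r}S^1$ above the bottom, and this is precisely what the paper supplies: a spherical square class forces the stable adjoint $S^{2r}\to\Sigma^\infty D_{2,r}S^1$ to be detected by $Sq^{r+1}$ on the bottom class of its mapping cone \cite[Proposition 5.8]{AsadiEccles}, and then $Sq^{r+1}=Sq^1Sq^r$ (for $r$ even) together with $H^{2r}(D_{2,r}S^1;\F_2)\cong H^r(B(r/2);\F_2)=0$ \cite{BrownGitler} rules this out. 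As it stands, your evenness claim is essentially equivalent to the lemma itself (a mod $2$ Hopf-invariant vanishing for $\Omega\Sigma D_{2,r}S^1$), so the proposal assumes the hard part rather than proving it.
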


\begin{proof}
According to James \cite{James-reducedproduct} for a path connected space $X$, there is an isomorphism of algebras $H_*(\Omega\Sigma X;k)\simeq T_k(\widetilde{H}_*(X;k))$ where $T_k(-)$ is the free tensor algebra over $k$, and $k$ is an arbitrary PID. We wish to show that there is no spherical class in $H_{2r}\Omega\Sigma D_{2,r}S^1\simeq T_{\Z/2}(\widetilde{H}_*D_{2,r}S^1)$. The space $D_{2,r}S^1$ has its bottom cell in dimension $r$, hence first nontrivial homology living in dimension $r$. The algebra structure of $H_*\Omega\Sigma D_{2,r}S^1$ then implies that $H_{2r}\Omega\Sigma D_{2,r}S^1$ is the vector space generated by typical elements $\alpha_r\otimes\beta_r$ and $\gamma_{2r}$ where $\alpha_r,\beta_r\in H_rD_{2,r}S^1$ and $\gamma_{2r}\in H_{2r}D_{2,r}S^1$ are arbitrary elements. Next, note that the homology $\widetilde{H}_*D_{2,r}S^1$ corresponds to the part of homology $H_*\Omega^2S^3$ of height $r$, and consequently $H_*\Omega\Sigma D_{2,r}S^1$ will be the free graded associative algebra generated by elements of $H_*\Omega^2S^3$ that are of height $r$
\cite[Theorem 9.4.5]{Ravenel-Orange}. The height $r$ elements of $H_*\Omega^2S^3$ are precisely those elements that map nontrivially under the projection
$(p_r)_*:\widetilde{H}_*\Omega^2S^3\to \widetilde{H}_*D_{2,r}S^1$ provided by Snaith splitting $\Sigma^\infty\Omega^2S^3\simeq\bigvee_{k=1}^{+\infty}\Sigma^\infty D_{2,k}S^1$. We recall the description of $H_*\Omega^2S^3$. At $p=2$, there is an isomorphism of algebras
$$H_*\Omega^2S^3\simeq\Z/2[x_1,x_3,x_7,x_{15},\ldots]$$
where $x_j\in H_j\Omega^2S^3$ \cite[Proposition 9.4.1]{Ravenel-Orange}. For an arbitrary class $x_{2^{i_1}-1}^{j_1}\cdots x_{2^{i_n}-1}^{j_n}\in\widetilde{H}_*\Omega^2S^3$ with $j_t\geqslant 0$ we write $\overline{x_{2^{i_1}-1}^{j_1}\cdots x_{2^{i_n}-1}^{j_n}}$ for its image under $(p_r)_*$. The height function $h:H_*\Omega^2S^3\to\Z_+$ is defined by setting $h(x_{2^i-1})=2^{i-1}$ and $h(\eta\xi)=h(\eta)+h(\xi)$ for any $\eta,\xi\in H_*\Omega^2S^3$. The definition of hight function implies that for a monomial $x_{2^{i_1}-1}^{j_1}\cdots x_{2^{i_n}-1}^{j_n}$ we have
$$h(x_{2^{i_1}-1}^{j_1}\cdots x_{2^{i_n}-1}^{j_n})=\sum_{t=1}^n h(x_{2^{i_t}-1}^{j_t})=\sum_{t=1}^n j_th(x_{2^{i_t}-1})=\sum_{t=1}^n j_t2^{i_t-1}.$$
By inspection, one can see that the only height $r$ class as above, living in dimension $r$ is $x_1^r$ giving rise to a nontrivial class $\overline{x_1^r}\in\widetilde{H}_rD_{2,r}S^1$. From this class, we get the nontrivial class $\overline{x_1^r}\otimes\overline{x_1^r}\in H_{2r}\Omega\Sigma D_{2,r}S^1$. Moreover, for any monomial $x_{2^{i_1}-1}^{j_1}\cdots x_{2^{i_n}-1}^{j_n}\in H_*\Omega^2S^3$ of height $r$, requiring $\dim(x_{2^{i_1}-1}^{j_1}\cdots x_{2^{i_n}-1}^{j_n})=\sum_{t=1}^n j_t(2^{i_t}-1)=2r$, multiplying the above equality for height by $2$, implies that $\sum j_t=0$, and consequently $j_t=0$ for all $t=1,\ldots,n$. That is we don't have any nontrivial monomial $x_{2^{i_1}-1}^{j_1}\cdots x_{2^{i_n}-1}^{j_n}$ of dimension $2r$ and height $r$. This shows that
$H_{2r}\Omega\Sigma D_{2,r}\simeq\Z/2\{\overline{x_1^r}\otimes\overline{x_1^r}\}$.\\
Now, if $f:S^{2r}\to\Omega\Sigma D_{2,r}S^1$ is any map with $f_*\neq 0$ then $h(f)=\overline{x_1^r}\otimes\overline{x_1^r}$. Consequently, the composition
$S^{2r}\to\Omega\Sigma D_{2,r}S^1\to QD_{2,r}S^1$ will behave similarly in homology. By \cite[Proposition 5.8]{AsadiEccles} this latter implies that the stable adjoint of $f$, $S^{2r}\to \Sigma^\infty D_{2,r}S^1$ is detected by $Sq^{r+1}$ on the bottom dimensional cohomology of its stable mapping cone, i.e. $Sq^{r+1}\overline{x_1^r}=g_{2r+1}$ where by abuse of notation $\overline{x_1^r}\in H^rC_f\simeq H^{r}D_{2,r}S^1\simeq\Z/2$ denotes a generator of $r$-dimensional cohomology and $g_{2r+1}\in H^{2r+1}C_f$ denotes a generator coming from the $2r+1$-cell attached by $f$. We show that the equality $Sq^{r+1}\overline{x_1^r}=g_{2r+1}$ cannot happen in $H^*C_f$. Since $r$ is even then $Sq^{r+1}=Sq^1Sq^r$. Moreover, by \cite[Theorem 1.3]{BrownGitler} $H^*B(r/2)\simeq 0$ for $*>r-\alpha(r/2)\geqslant r-1$, where $\alpha(n)$ is the number of $1$'s in binary expansion of $n$; in particular $H^rB(r/2)\simeq 0$.  Consequently, $Sq^r$ acts trivially on the zero dimensional generator of $B(r/2)$. By stability of Steenrod operations this implies that $Sq^r\overline{x_1^r}=0$ in $H^*D_{2,r}\simeq H^* C_f$ (in dimensions $*<2r+1$). This contradicts $Sq^{r+1}\overline{x_1^r}=g_{2r+1}$ as $Sq^{r+1}\overline{x_1^r}=Sq^1Sq^r\overline{x_1^r}=Sq^10=0$. Hence, $f_*=0$ and consequently $h(f)=0$.
\end{proof}

\section{The unstable Hurewicz image of decomposable elements in ${_2\pi_*^s}$}
As we said earlier, we are interested in Freudenthal theorem as it is an integral result and comes with no localisation. This section is devoted to the proof of Theorem \ref{main0}.

\begin{proof}[Proof of Theorem \ref{main0}]
For $f\in\pi_i^s$ and $g\in\pi_j^s$, the product $gf$ in ${\pi_*^s}$ is determined by the composition of stable maps $S^{i+j}\stackrel{f}{\to}S^j\stackrel{g}{\to}S^0$. {For $i>j$, we have $i+j>2j$ which implies that $i+j\geqslant 2j+1$. The result follows from Theorem \ref{main1}(ii). The case $i<j$ follows either by the same reasoning or from commutativity of $\pi_*^s$.}\\
Next, suppose $i=j=n$. {As elements of $\pi_*QS^0$, the product $gf$ is given by the composition $S^{2n}\to QS^n\to QS^0$.} By Freudenthal's theorem, the stale adjoint of $gf$ factors as
$$S^{2n}\stackrel{f'}{\lra} \Omega S^{n+1}\lra QS^n\lra QS^0$$
{for some $f':S^{2n}\to\Omega S^{n+1}$ which is not necessarily unique.} Now, $h(fg)\neq 0$ implies that $h(f')\neq 0$, and consequently $h(f')=lx_n^2$ where $x_n\in H_n(\Omega S^{n+1};\Z)$ is a generator, and $l\in\Z-\{0\}$. It is  well known (see for example \cite[Proposition 6.1.5]{Harper}) {that} $h(f')\neq 0$ if and only if $f$ is detected by the unstable Hopf invariant {(as defined in many classic books such as \cite[Chapter 4]{MosherTangora})}. {Similarly, $g$ is detected by the unstable Hopf invariant. This implies that $n+1$ has to be even, i.e. $n$ must be odd.}\\
{If $f$ is of even unstable Hopf invariant then $f'=l[\iota_{n+1},\iota_{n+1}]+f''$ for some $f''\in \ker(H^{\textrm{unstable}})$ and some even nonzero integer $l$. Here, $H^{\textrm{unstable}}:\pi_{2n}\Omega S^{n+1}\simeq\pi_{2n+1}S^{n+1}\to\Z$ is the unstable Hopf invariant and $[\iota_{n+1},\iota_{n+1}]$ is the Whitehead product. Since the Whitehead product $[\iota_{n+1},\iota_{n+1}]$ belongs to the kernel of suspension map $E_*:\pi_*\Omega S^{n+1}\to\pi_*\Omega^2S^{n+2}$, it follows that $f=E^\infty f'=E^\infty f''$ where $E^\infty:\Omega S^{n+1}\to QS^n$ is the stabilisation map. Consequently, this would imply that in homology $f'':S^{2n}\to\Omega S^{n+1}$ sends $x_{2n}$ to $lx_{n}^2$, that is $f''$ is of unstable Hopf invariant $l$ which possible only if $l=0$. But, this is a contradiction. Hence, $f$ (and similarly $g$) cannot be of even unstable Hopf invariant.\\
Therefore, $f$ should be of odd unstable Hopf invariant. In this case, reduction mod $2$ implies that $f$ would be an odd multiple of $\eta$, $\nu$, or $\sigma$ which determines $n\in\{1,3,7\}$ by the Hopf invariant one result \cite{Adams-Hopfinv}. Similarly, we would have $g$ as an odd multiple of Hopf invariant one elements. On the other hand it is known that $\eta^2$, $\nu^2$, and $\sigma^2$ maps nontrivially under $h:\pi_*^s\simeq\pi_*QS^0\to H_*(QS^0;\Z)$. This completes the proof.}
\end{proof}


\subsection{Spherical classes in some finite loop spaces: Proof of Theorem \ref{main}}
{The main goal of this section, as well as the following section, is to provide an application of Theorem \ref{main0} by proving Theorem \ref{main}. This allows to derive positive evidence for the truth of Curtis and Eccles conjectures when restricted to finite loop spaces. For this reason, during this section, as well as the following section, we will only work at the prime $2$ writing $\pi_*$ and $H_*$ for the $2$-component of $\pi_*$ and $H_*(-;\Z/2)$, respectively. We only note that similar techniques might be applied for the odd primary case as well.}

We will consider using James fibrations $S^n\stackrel{E}{\to}\Omega S^{n+1}\stackrel{H}{\to}\Omega S^{2n+1}$ referring to $E$ as the suspension and $H$ as the second James-Hopf map. We also write $E^\infty:X\to QX$ for the inclusion which induces $\pi_*X\to\pi_*QX\simeq\pi_*^sX$ sometimes referring to it as the stablisation. We also may write $E^k$ for the iterated suspension $X\to\Omega^k\Sigma^kX$. We shall use $H_*\Omega\Sigma X\simeq T(\widetilde{H}_*X)$ where $X$ is any path connected space, $T(-)$ is the tensor algebra functor and $\widetilde{H}_*$ denotes the reduced homology. We shall write $\sigma_*:H_*\Omega X\to H_{*+1}X$ for homology suspension induced by the evaluation $\Sigma\Omega X\to X$ recalling that it kills decomposable elements in the Pontrjagin ring $H_*\Omega X$. We begin with the following.

\begin{thm}
(i) The only spherical classes in $H_*\Omega^2S^3$ live in dimensions $1$ and $2$, arising from the identity $S^3\to S^3$ and $\Sigma\eta:S^4\to S^3$. There are no spherical classes in $H_*\Omega S^3$ other than the bottom dimensional class given  by $S^2\to\Omega S^3$.\\
(ii) The only spherical classes in $H_*\Omega^4S^7$ live in dimensions $3$ and $6$, arising from the identity $S^7\to S^7$ and $\Sigma^3\nu:S^{10}\to S^7$. There are no spherical classes in $H_*\Omega^i S^7$ for $i<4$ other than $S^{3+i}\to \Omega^{4-i}S^7$ adjoint to the identity $S^7\to S^7$ that corresponds to the inclusion of the bottom cell.\\
(iii) The only spherical classes in $H_*\Omega^8S^{15}$ live in dimensions $7$ and $14$, arising from the identity $S^{15}\to S^{15}$ and $\Sigma^7\sigma:S^{22}\to S^{15}$. There are no spherical classes in $H_*\Omega^i S^{15}$ for $i<8$ other that the one arising from the inclusion of the bottom cell corresponding to the identity on $S^{15}$.
\end{thm}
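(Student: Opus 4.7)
My plan proceeds in parallel for the three cases $n\in\{2,4,8\}$ and I sketch it for part (i); the other two parts go through under the substitution $\eta\mapsto\nu\mapsto\sigma$. The starting point is Kudo-Araki's computation $H_*(\Omega^2 S^3;\Z/2)\cong\Z/2[\xi_1,\xi_2,\ldots]$ with $|\xi_i|=2^i-1$, whose primitives are precisely $\{\xi_i^{2^k}:i\geq 1, k\geq 0\}$. A spherical class is primitive, so the problem reduces to deciding which primitives are actually hit by the unstable Hurewicz. Two are hit: $\xi_1\in H_1$ by the adjoint of $\mathrm{id}_{S^3}$, and $\xi_1^2\in H_2$ by the adjoint of $\Sigma\eta$, since $\Sigma\eta$ has unstable Hopf invariant one and the Hurewicz at the bottom of a double loop space reads off this invariant via the James identification. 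The corresponding classes in parts (ii),(iii) arise the same way from $\mathrm{id}_{S^7},\Sigma^3\nu$ and $\mathrm{id}_{S^{15}},\Sigma^7\sigma$.

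To exclude every other primitive I plan to use the looped James-Hopf tower $\Omega^2 S^3\stackrel{\Omega H}{\to}\Omega^2 S^5\stackrel{\Omega H}{\to}\Omega^2 S^9\stackrel{\Omega H}{\to}\cdots$, coming from the EHP fibrations $\Omega S^{2m}\to\Omega^2 S^{2m+1}\to\Omega^2 S^{4m+1}$. The crucial homological input is that $(\Omega H)_*$ annihilates the bottom generator $\xi_1$ (which detects the fibre $\Omega S^2$) but carries every $\xi_i$ with $i\geq 2$ onto the bottom Kudo-Araki generator of $H_*\Omega^2 S^5$, so $(\Omega H)_*\xi_i^{2^k}\neq 0$ for such $i$. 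Hence, given $f\in\pi_m\Omega^2 S^3$ with $h(f)=\xi_i^{2^k}$ and $i\geq 2$, the iterated image $F\in\pi_m\Omega^2 S^{2^i+1}$ has nontrivial Hurewicz image equal to a power of the bottom class. For $k=0$ this forces $F$ to be an odd multiple of $\mathrm{id}_{S^{2^i+1}}$, since $\pi_{2^i-1}\Omega^2 S^{2^i+1}\cong\pi_{2^i+1}S^{2^i+1}\cong\Z$ is torsion-free; but $f$ originates in the $2$-torsion group $\pi_{2^i+1}S^3$, and the James-Hopf map is an abelian group homomorphism, so its image in $\Z$ must vanish, a contradiction. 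For $k\geq 1$ the Hurewicz image is a $2^k$-th Pontrjagin power, and I translate $F$ via the stabilisation $\Omega^2 S^{2^i+1}\to QS^{2^i-1}$ together with $\Omega QS^n\simeq QS^{n-1}$ and iterated looping into a decomposable class in $\pi_*^s\simeq\pi_*QS^0$; Theorem \ref{main0} then forces its Hurewicz image to lie in $\{h(\eta^2),h(\nu^2),h(\sigma^2)\}$, which by dimension-matching only re-accounts for the two primitives already exhibited.

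The lower-loop-space assertions are proved by the same James-Hopf tower applied to $H:\Omega S^3\to\Omega S^5$, and to analogous Hopf maps for $\Omega^iS^7$ and $\Omega^iS^{15}$: $H_*\Omega S^3\cong\Z/2[y_2]$ has primitives $y_2^{2^k}$ and $H$ sends $y_2^2\mapsto y_4$, so the same torsion-versus-torsion-free termination rules out every $k\geq 1$ and leaves only the bottom cell $y_2$. The principal technical obstacle is the mixed-primitive case $k\geq 1$ in the previous paragraph: one must verify carefully that the iterated stabilisation of a class with Hurewicz image $\xi_i^{2^k}$ really does land decomposably in $\pi_*^s$, so that Theorem \ref{main0} legitimately applies, and one must invoke Adams's Hopf invariant one theorem to terminate the James-Hopf tower at the $k=0$ step --- which is exactly where the restriction to $n\in\{2,4,8\}$ enters.
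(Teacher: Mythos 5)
Your identification of the realized classes and your exclusion of the generators $\xi_i$, $i\geq 2$, for $\Omega^2S^3$ (looped James--Hopf tower plus the observation that a torsion group cannot map onto an odd multiple of a generator of $\pi_{2^i+1}S^{2^i+1}\cong\Z$) are fine, and this is genuinely different from the paper. But the step you yourself flag as the ``principal technical obstacle'' is a real gap, not a verification to be carried out. For $k\geq 1$ you propose to ``translate $F$ via the stabilisation $\Omega^2S^{2^i+1}\to QS^{2^i-1}$ together with $\Omega QS^n\simeq QS^{n-1}$ and iterated looping into a decomposable class in $\pi_*^s$'': $F$ is a map out of a sphere, so it cannot be looped, and passing from $E^\infty F\in\pi_mQS^{2^i-1}$ to its stable adjoint in $\pi_{m-2^i+1}^s\simeq\pi_{m-2^i+1}QS^0$ neither makes the element decomposable in the ring $\pi_*^s$ nor retains any control of its unstable Hurewicz image in $H_*QS^0$; Theorem \ref{main0} applies only to honest products of positive-dimensional stable classes. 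To turn ``Hurewicz image is a power of the bottom class of $QS^n$'' into a decomposable element with nonzero Hurewicz image you must compose with an actual map $QS^n\to Q_0S^0$ that is injective on the relevant homology, i.e.\ with $\Omega^\infty$ of a Hopf invariant one class; such a map exists only for $n=1,3,7$, and this is precisely how the paper argues (composing with $\Omega^2\eta$, resp.\ $\Omega^4\nu$, $\Omega^8\sigma$, the latter two being homology monomorphisms via the splittings $\Omega S^4\simeq S^3\times\Omega S^7$ and $\Omega S^8\simeq S^7\times\Omega S^{15}$). In your scheme, after running the tower you sit in $\Omega^2S^{2^i+1}$ for arbitrary $i$, and for $i\geq 4$ no such composition exists (Adams), so the primitives $\xi_i^{2^k}$ with $i\geq 4$, $k\geq 1$ are not excluded; the classes $\xi_1^{2^k}$ with $k\geq 2$ (degrees $4,8,16,\dots$) are killed by the first tower map and are never addressed; and your closing remark that Adams is needed ``at the $k=0$ step'' has it backwards --- your torsion argument already closes $k=0$, and it is $k\geq 1$ that remains open.

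A second gap is the assertion that parts (ii) and (iii) ``go through under the substitution $\eta\mapsto\nu\mapsto\sigma$''. The homology $H_*\Omega^4S^7$ (and $H_*\Omega^8S^{15}$) is polynomial on the much larger family of admissible Dyer--Lashof classes $Q^Ix_3$ (resp.\ $Q^Ix_7$), not on a single string of generators as for $\Omega^2S^3$, and the EHP fibrations $\Omega S^{2m}\to\Omega^2S^{2m+1}\to\Omega^2S^{4m+1}$ you invoke are specific to (loops of) the James fibration for odd spheres; there is no analogous tower for $4$- or $8$-fold loop spaces, so the case analysis does not transfer by relabelling. The paper sidesteps all of this: for any $f$ nonzero in homology it composes with $\Omega^2\eta$ (resp.\ $\Omega^4\nu$, $\Omega^8\sigma$) and the stabilisation, so that $\eta\cdot E^\infty f$ (resp.\ $\nu\cdot E^\infty f$, $\sigma\cdot E^\infty f$) is a decomposable element of $\pi_*^s$ with nonzero unstable Hurewicz image; Theorem \ref{main0} then forces $n=2$ (resp.\ $6$, $14$), the small homotopy group in that dimension identifies $f$, and the statements for fewer loops follow because the resulting Hurewicz images are decomposable in the Pontrjagin ring and hence die under homology suspension. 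If you wish to keep your route, retain the tower for the $k=0$ generators, but for the $2^k$-th powers (and for the $4$- and $8$-fold loop cases) you should apply the composition trick to the original $f$ rather than to $F$; as it stands, your argument does not prove the theorem.
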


\begin{proof}
Let $k\geqslant 0$. Consider the stablisation $S^{i+k}\to QS^{i+k}$ and by abuse of notation write $E^\infty:\Omega^iS^{i+k}\to\Omega^iQS^{i+k}=QS^k$ for its $i$-fold loop. Recall that $E^\infty_*$ induces a monomorphism in homology. We also write $E:S^k\to\Omega S^{k+1}$ for the suspension map, that induces the suspension homomorphism $\pi_*S^k\to\pi_{*}\Omega S^{k+1}\simeq\pi_{*+1}S^{k+1}$. We proceed as follows.\\
(i) Obviously, the inclusion of the bottom cell $S^1\to\Omega^2S^3$ is nontrivial in homology. Let $f:S^n\to\Omega^2S^3$, $n>1$, be any map with $f_*\neq 0$. First, note that the Hopf fibration $\eta:S^3\to S^2$ induces a homotopy equivalence $\Omega^2\eta:\Omega^2S^3\to\Omega^2_0S^2$ where $\Omega^2_0S^2$ is the base point component of $\Omega^2S^2$. In particular, $\Omega^2\eta$ is an isomorphism in homology. Consequently, the composition
$((\Omega^2E^\infty)(\Omega^2\eta)f)_*\neq 0$. On the other hand, note that we have a commutative diagram
$$\xymatrix{
\Omega^2S^3\ar[r]^-{\Omega^2 E^\infty}\ar[d]^-{\Omega^2\eta}    & QS^1\ar[d]^-{\Omega^2 \eta}\\
\Omega^2_0S^2\ar[r]^-{\Omega^2 E^\infty}                        & Q_0S^0
}$$
where the right vertical map is obtained from $\eta:QS^3\to QS^2$ and we write $Q_0S^0$ for the base point component of $QS^0$. This implies that $(\Omega^2\eta)_*((\Omega^2E^\infty)f)_*\neq 0$. This implies that for $E^\infty f$ and $\eta$ as elements of $\pi_*^s$, we have $h(\eta(E^\infty f))\neq 0$ where $h$ is the unstable Hurewicz homomorphism. By Theorem \ref{main0} if $n>0$ then $E^\infty f$ and $\eta$ are in the same dimension. That is $E^\infty f\in\pi_1^s$, consequently $n=2$. Therefore, $f\in\pi_2\Omega^2S^3\simeq\pi_4S^3\simeq\Z/2$, which implies that $f= E\eta$ viewing $\eta:S^3\to S^2$. Moreover, note that by Freudenthal's theorem $f$ pulls back to $\pi_3S^2$ allowing to consider $f$ as $f:S^2\to\Omega S^2$. The only $2$-dimensional class in $H_2\Omega S^2$ is given by $x_1^2$ which maps to $x_1^2\in H_2\Omega^2S^3$ which is a decomposable class, where $x_1\in\widetilde{H}_1S^1$ is a generator.\\
Moreover, suppose $f$ is any map $S^{n+1}\to\Omega S^3$ which is nontrivial in homology. By adjointing down, we have a map $f':S^n\to\Omega^2S^3$ which is nontrivial in homology. We have $f_*\sigma_*x=\sigma_*f'_*x$ for any homology class $x$. However, by the above computation, if $f'_*x\neq 0$ then $f'_*x=x_1^2$ which is a decomposable class that is killed under homology suspension. This contradicts the claim that $f_*\neq 0$ for $f:S^{n+1}\to\Omega S^3$. \\
(ii) The argument here is almost the same. We consider the Hopf fibration
$$\Omega S^7\to\Omega S^4\to S^3\to S^7\to S^4$$
noting that $H(\nu)=1$ where $H:\Omega S^4\to \Omega S^7$ is the second James-Hopf map. This provides us with a decomposition $\Omega S^4\to S^3\times\Omega S^7$. A choice of decomposition may be given by $\Omega S^7\stackrel{(*,1)}{\longrightarrow}S^3\times\Omega S^7 \stackrel{E+\Omega\nu}{\longrightarrow}\Omega S^4$ and $H:\Omega S^4\to \Omega S^7$ in the other direction where $+$ is the loop sum in $\Omega S^4$. The decomposition of $3$-fold loop spaces, in particular implies that there is monomorphism in homology given by $\Omega^4\nu:\Omega^4 S^7\to\Omega^4S^4$.\\
Note that $\widetilde{H}_n\Omega^4S^7\simeq 0$ for $n<3$. Similar to the previous part, the inclusion of the bottom cell $S^3\to\Omega^4S^7$, adjoint to the identity $S^7\to S^7$ is nontrivial in homology. Suppose $f:S^n\to\Omega^4S^7$ with $n>3$ and $f_*\neq 0$ is given. Then, similar as above, we have $((\Omega^3 E^\infty)(\Omega^3\nu)f)_*\neq0$. Consider the commutative diagram
$$\xymatrix{
S^7\ar[r]^-{E^\infty}\ar[d]^-\nu & QS^7\ar[d]^-{\nu}\\
S^4\ar[r]^-{E^\infty}            & QS^4}$$
and then loop it $4$-times. Moreover, using the splitting of $\Omega S^4$ as above, we may rewrite the quadruple loop of the left vertical arrow and obtain a commutative diagram of $4$-fold loop maps as
$$\xymatrix{
\Omega^4S^7\ar[r]^-{\Omega^4E^\infty}\ar[d]_-{\Omega^4(*,1_{S^7})} & QS^3\ar[dd]^-{\Omega^4\nu}\\
S^3\times\Omega^4S^7\ar[d]_-{\Omega^3(E+\Omega\nu)} \\
\Omega^4S^4\ar[r]^-{\Omega^4E^\infty}                     & QS^0}$$
noting that the composition of vertical arrows on the left is just $\Omega^4\nu$. By commutativity of the above diagram, we see that $h(\nu E^\infty f)\neq 0$. Consequently, by Theorem \ref{main0} as elements of $\pi_*^s$, $E^\infty f$ is in the same dimension as $\nu$ implying that $n=6$. Therefore, $f\in{_2\pi_6}\Omega^4S^7\simeq{_2\pi_{10}}S^7\simeq\Z/8$ and $f=E^3\nu$ where $E^3$ is the iterated suspension (strictly speaking, $f$ will be three fold suspension of an odd multiple of $\nu$ in ${_2\pi_3^s}$, but since we are working at the prime $2$, we take it as $\nu$). By Freudenthal suspension theorem it pulls back to $\pi_7S^4\simeq\pi_6\Omega S^4$. Hence, $f$ does factorise as $f:S^6\to\Omega S^4\to\Omega^4S^7$. The map $S^6\to\Omega S^4$ has square of a three dimensional class as its Hurewicz image. Hence, as previous part, eliminating classes in $H_*\Omega^iS^7$ with $i<4$ from being spherical.\\
(iii) Similar to the previous part, one has to use a decomposition $\Omega S^8\simeq S^7\times\Omega S^{15}$. We leave the rest to the reader.\\
\end{proof}

The above has an application to the finite case of Curtis conjecture.

\begin{crl}\label{Curtis-1}
Suppose $f\in H_*\Omega^8S^8$ is a spherical class. Then, $f$ either corresponds to a spherical class in $\Omega^8S^{15}$ or is suspension of a spherical class in $\Omega^7S^7$ under stablisation map $\Omega^7S^7\to\Omega^8S^8$. In particular, there are spherical classes arising from Hopf invariant one elements $\eta,\nu,\sigma$ and Kervaire invariant one elements $\eta^2,\nu^2,\sigma^2$ in $H_*\Omega^8S^8$.
\end{crl}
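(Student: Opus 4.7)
The plan is to exploit the Hopf invariant one element $\sigma:S^{15}\to S^8$ in exactly the manner that $\nu$ is used in part (ii) of the preceding theorem. At the prime $2$ the map
$$E+\Omega\sigma\ :\ S^7\times \Omega S^{15}\lra \Omega S^8$$
is a homotopy equivalence---essentially James' splitting, with the second James--Hopf map $H:\Omega S^8\to\Omega S^{15}$ providing a retraction onto the second factor. Looping seven times gives a $2$-local decomposition
$$\Omega^8 S^8\ \simeq\ \Omega^7 S^7\times \Omega^8 S^{15}$$
in which the first factor is included via the iterated stabilisation $\Omega^7 E$ and the second via $\Omega^8\sigma$.

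Given a spherical class $f:S^n\to\Omega^8 S^8$ with $n>0$, project through the above equivalence to obtain components $f_1:S^n\to\Omega^7 S^7$ and $f_2:S^n\to\Omega^8 S^{15}$. Precomposing $f_1\times f_2$ with the diagonal of $S^n$ and applying K\"unneth yields
$$h(f)\ =\ h(f_1)\otimes 1\ +\ 1\otimes h(f_2)\ \in\ H_*\Omega^7 S^7\otimes H_*\Omega^8 S^{15},$$
where the cross terms vanish because $H_i(S^n)=0$ for $0<i<n$. Hence $h(f_1)$ is spherical in $H_*\Omega^7 S^7$ and $h(f_2)$ is spherical in $H_*\Omega^8 S^{15}$; this establishes the dichotomy asserted by the corollary.

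For the explicit existence list, part (iii) of the preceding theorem already supplies the spherical classes in $H_*\Omega^8 S^{15}$ arising from $\sigma$ and $\sigma^2$, and these push forward under $\Omega^8\sigma$ to spherical classes in $H_*\Omega^8 S^8$. The classes from $\eta,\eta^2$ and $\nu,\nu^2$ are first detected as spherical in $H_*\Omega^2 S^3$ and $H_*\Omega^4 S^7$ by parts (i) and (ii) of that theorem (alternatively by Theorem \ref{main0} combined with the classical Hopf invariant one result); iterated adjointing down, which preserves sphericity, carries them to spherical classes in $H_*\Omega^7 S^7$, and the stabilisation $\Omega^7 S^7\to\Omega^8 S^8$ is the first-factor inclusion in our splitting and so a monomorphism on mod $2$ homology. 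The principal obstacle is tracking each of the six candidate classes through the chain of adjointings and stabilisations without loss; once the James splitting is in place, however, the verification reduces to K\"unneth together with the injectivity of factor inclusions, and the rest has already been computed.
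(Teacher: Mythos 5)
Your dichotomy argument is essentially the paper's own (implicit) route: the paper obtains Theorem 4.2(iii) from the splitting $\Omega S^8\simeq S^7\times\Omega S^{15}$ furnished by the Hopf invariant one element $\sigma$, and the corollary is exactly the statement that looping this splitting seven times gives $\Omega^8S^8\simeq\Omega^7S^7\times\Omega^8S^{15}$, with the first factor included by the stabilisation $\Omega^7E$ and the second by $\Omega^8\sigma$; your K\"unneth computation $h(f)=h(f_1)\otimes 1+1\otimes h(f_2)$, using that the reduced diagonal of $S^n$ vanishes in homology, is a correct and slightly more explicit way of recording the resulting dichotomy, and the $\sigma,\sigma^2$ classes are handled exactly as in the paper's remark following the corollary.

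The one genuine slip is in your mechanism for producing the $\eta,\eta^2,\nu,\nu^2$ classes: ``iterated adjointing down'' takes a map $S^n\to\Omega^aS^b$ to its adjoint $S^{n-1}\to\Omega^{a+1}S^b$, so it raises the number of loops while keeping the target sphere $S^b$ fixed; it can never carry the spherical classes of $H_*\Omega^2S^3$ or $H_*\Omega^4S^7$ into $H_*\Omega^7S^7$, whose underlying sphere is $S^7$. The correct transport is by composing with the (looped) Hopf maps and then stabilising: $\Omega^2S^3\stackrel{\Omega^2\eta}{\longrightarrow}\Omega^2_0S^2\to\Omega^7_0S^7$ and $\Omega^4S^7\stackrel{\Omega^4\nu}{\longrightarrow}\Omega^4S^4\to\Omega^7S^7$, where $\Omega^2\eta$ is an equivalence, $\Omega^4\nu$ is a homology monomorphism by the splitting of $\Omega S^4$, and the stabilisation maps are homology monomorphisms; on homotopy these composites send the classes of Theorem 4.2(i),(ii) to (pullbacks of) $\eta,\eta^2$ and $\nu,\nu^2$. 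Alternatively, your parenthetical route does work and is even simpler: each of $\eta,\nu,\sigma,\eta^2,\nu^2,\sigma^2$ desuspends to $\pi_*\Omega^8_0S^8$ (e.g.\ $\sigma\circ\Sigma^7\sigma\in\pi_{22}S^8$), and since these elements have nonzero unstable Hurewicz image in $H_*Q_0S^0$ (classically for the Hopf and Kervaire classes, consistently with Theorem \ref{main0}), naturality of the Hurewicz map along $\Omega^8_0S^8\to Q_0S^0$ forces their images in $H_*\Omega^8_0S^8$ to be nonzero as well; but as written, the ``adjointing down'' step needs to be replaced by one of these arguments.
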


Note that spherical classes in $\Omega^8S^{15}$ arise from $1$ and $\sigma$ which map to $\sigma$ and $\sigma^2$ under $\Omega\sigma:\Omega S^{15}\to\Omega S^8$. By previous theorem spherical classes in $\Omega^8S^{15}$ are known. Hence, the computations in the above theorem will be complete once we know spherical classes in $\Omega^7S^7$.

\begin{rmk}
Let's note that the existence of Hopf fibrations, as well as James-Hopf maps together with Freudenthal suspension theorem all work well integrally. Also, the computations of Herewicz map on decomposable elements maybe carried out integrally as done in Theorem \ref{main0}. We then conclude that the above results may be stated integrally, possibly with some modifications due to omitting the odd torsion.
\end{rmk}

\begin{crl}
(i) The only spherical class in $H_*\Omega S^2$ corresponds to $\eta\in\pi_1^s\simeq\pi_2QS^1$. In fact there is such a class given by $S^2\to\Omega S^2$ provided by the adjoint of $\eta:S^3\to S^2$.\\
(ii) The only spherical class in $H_*\Omega^iS^{3+i}$, $1\leqslant i\leqslant3$, corresponds to $\nu\in\pi_3^s\simeq\pi_6QS^3$. In fact there is such a class give by $S^6\to\Omega S^4\stackrel{E^{i-1}}{\to}\Omega^iS^{4+i}$. Moreover, there is no spherical class in $H_*\Omega^jS^{4+i}$ for $j<i$.\\
(iii) The only spherical class in $H_*\Omega^iS^{7+i}$, $1\leqslant i\leqslant7$, corresponds to $\sigma\in\pi_7^s\simeq\pi_{14}QS^7$. In fact there is such a class given by
$S^{14}\to\Omega S^8\stackrel{E^{i-1}}{\to}\Omega^iS^{8+i}$. Moreover, there is no spherical class in $H_*\Omega^jS^{8+i}$ for $j<i$.
\end{crl}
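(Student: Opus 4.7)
The three parts admit a uniform treatment: for each $k\in\{1,3,7\}$ (corresponding to (i), (ii), (iii)), the previous theorem already describes the spherical classes of $H_*\Omega^{k+1}S^{2k+1}$ completely, namely the bottom-cell class in dimension $k$ and a single additional class in dimension $2k$ coming from the Hopf invariant one element $\eta_k\in\pi_{2k+1}S^{k+1}$ (so $\eta_1=\eta$, $\eta_3=\nu$, $\eta_7=\sigma$). The plan is to push this information down to $H_*\Omega^i S^{k+i}$ for $1\leqslant i\leqslant k$ by means of the iterated suspension map $E^{k+1-i}:\Omega^i S^{k+i}\to\Omega^{k+1}S^{2k+1}$.

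The essential ingredient I will invoke is the standard fact that, for a path connected space $X$, the stabilisation $\Omega^a\Sigma^a X\to QX$ is a stable split monomorphism by Snaith, and in particular each intermediate suspension $E:\Omega^a\Sigma^a X\to\Omega^{a+1}\Sigma^{a+1}X$ induces a monomorphism on mod-$2$ homology. Specialising to $X=S^k$ gives $E^{k+1-i}_*:H_*\Omega^i S^{k+i}\hookrightarrow H_*\Omega^{k+1}S^{2k+1}$. Consequently any spherical class $f:S^n\to\Omega^i S^{k+i}$ yields a spherical class $E^{k+1-i}\circ f$ in $\Omega^{k+1}S^{2k+1}$, which by the previous theorem forces $n\in\{k,2k\}$ with prescribed Hurewicz image.

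For existence, the bottom-cell inclusion $S^k\to\Omega^i S^{k+i}$, adjoint to the identity of $S^{k+i}$, is visibly spherical. The class in dimension $2k$ is constructed by starting from the adjoint $S^{2k}\to\Omega S^{k+1}$ of $\eta_k$, whose Hurewicz image is nonzero in $H_*\Omega S^{k+1}$ because, under the splitting $\Omega S^{k+1}\simeq S^k\times\Omega S^{2k+1}$ arising from the Hopf fibration and used in the proof of the previous theorem, it corresponds to the bottom cell of the $\Omega S^{2k+1}$ factor. Applying $E^{i-1}$, injective on homology by the same monomorphism property, transfers this to the spherical class in $H_{2k}\Omega^i S^{k+i}$ represented by the composite displayed in the statement.

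The ancillary clauses about the absence of extra spherical classes in the further undeloopings (the ``$\Omega^j S^{\ast}$ for $j<i$'' parts of the statement, modulo an evident typo in the sphere index) are handled by the same reduction: one iterates the suspension so that the target lands in the range $\Omega^m S^{2k+1}$ with $m<k+1$, which the previous theorem covers via its ``bottom-cell only'' conclusion for the partial deloopings, and only the bottom cell of $\Omega^j S^{\ast}$ itself can survive the pullback. The main obstacle is purely bookkeeping, namely arranging the index arithmetic so that the target of the iterated suspension really does fall into this range for each allowed $(i,j)$; no new conceptual difficulty arises beyond what is already present in the previous theorem's proof.
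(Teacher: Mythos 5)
Your proposal is correct and takes essentially the same route as the paper: push spherical classes forward along the homology-monic (iterated) suspension maps into $\Omega^{k+1}S^{2k+1}$ (resp.\ into its partial deloopings for the ``moreover'' clauses), invoke the preceding theorem, and obtain existence from the bottom cell together with the adjoint of the Hopf invariant one element, whose Hurewicz image is the square class. The only real difference is that the paper additionally identifies the representing homotopy class explicitly (for $i>1$ via $\pi_{6+i}S^{3+i}\simeq\pi_3^s$ in the stable range, and for $i=1$ via the unstable Hopf invariant, i.e.\ $Sq^4$-detection in the mapping cone) and treats the nonexistence in lower deloopings by adjointing down so that homology suspension kills the decomposable Hurewicz image, rather than by your suspending up into the range covered by the theorem; both variants rest on the same monomorphism fact and the same theorem.
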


\begin{proof}
All of these follow from the above theorem together with the fact that the suspension map $\Omega S^{k+1}\to\Omega^{i}S^{k+i}$ induces a monomorphism in homology, hence preserving spherical classes. The nonexistence part, follows similar to the previous theorem by adjointing down. We do (ii) for illustration. Consider the iterated suspension map $\Omega^iS^{3+i}\to\Omega^4S^7$. Hence, for $f:S^n\to\Omega^iS^{3+i}$ with $f_*\neq 0$ the composition $S^n\to\Omega^iS^{4+i}\to\Omega^4S^7$ is nontrivial in homology. Therefore, $n=6$ and $f:S^6\to\Omega^iS^{i+3}$ which maps to $\nu$ by the above theorem. In fact, for $i>1$, $f\in\pi_{6+i}S^{3+i}\simeq\pi_3^s\simeq\Z/8$. Hence, $f=E^i\nu$. For $i=1$, $f:S^6\to\Omega S^4$ is nontrivial, if and only if $f_*=g_3^2$, hence the adjoint of $f$ is detected by the unstable Hopf invariant, which working at the prime $2$ means it is detected by $Sq^4$ in its mapping cone. This means $f$ is an odd multiple of $S^7\to S^4$.
\end{proof}

Finally, we conclude by computing spherical classes in $H_*\Omega^2S^2$.
\begin{thm}
The only spherical classes in $H_*\Omega^2S^2$ arise from the Hopf invariant one and Kervaire invariant one elements $\eta$ and $\eta^2$, respectively.\\
\end{thm}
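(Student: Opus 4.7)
The plan is to reduce the computation to part (i) of the previous theorem by means of the Hopf fibration. For $n\geqslant 1$, any basepoint-preserving map $f:S^n\to\Omega^2 S^2$ factors through the basepoint component $\Omega_0^2 S^2$, so the Hurewicz image of $\pi_n\Omega^2 S^2$ lies entirely inside $H_*\Omega_0^2 S^2$. In particular, it suffices to classify spherical classes in $H_*\Omega_0^2 S^2$.

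Next, loop the Hopf fibration $S^1\to S^3\stackrel{\eta}{\to}S^2$ twice to obtain the fibre sequence $\Omega^2 S^1\to\Omega^2 S^3\stackrel{\Omega^2\eta}{\lra}\Omega^2 S^2$. Since $\Omega S^1\simeq\Z$ is discrete, one has $\Omega^2 S^1\simeq*$, so $\Omega^2\eta$ realises a weak homotopy equivalence $\Omega^2 S^3\to\Omega_0^2 S^2$. This equivalence is natural with respect to the Hurewicz homomorphism, so it puts spherical classes in $H_*\Omega^2 S^3$ in bijection with spherical classes in $H_*\Omega_0^2 S^2$.

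By part (i) of the previous theorem, the spherical classes in $H_*\Omega^2 S^3$ are exactly two: the $1$-dimensional class realised by the adjoint $S^1\to\Omega^2 S^3$ of $\mathrm{id}_{S^3}$, and the $2$-dimensional class realised by the adjoint $S^2\to\Omega^2 S^3$ of $\Sigma\eta:S^4\to S^3$. Post-composing with $\Omega^2\eta$ transports these to the adjoints of $\eta\circ\mathrm{id}_{S^3}=\eta:S^3\to S^2$ and $\eta\circ\Sigma\eta=\eta^2:S^4\to S^2$ respectively, which are precisely the Hopf invariant one element $\eta\in\pi_1^s$ and the Kervaire invariant one element $\eta^2\in\pi_2^s$.

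The only genuine input beyond the previous theorem is the collapsing $\Omega^2 S^1\simeq*$ that turns $\Omega^2\eta$ into a component-wise weak equivalence, and the small bookkeeping step of chasing the two distinguished spherical classes through this equivalence; there is no substantive obstacle.
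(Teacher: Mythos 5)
Your argument is essentially the paper's own proof: both use the homotopy equivalence $\Omega^2\eta:\Omega^2S^3\to\Omega^2_0S^2$ coming from the Hopf fibration to transport the spherical classes computed in $H_*\Omega^2S^3$ (arising from the identity and $\Sigma\eta$) to the classes of $\eta$ and $\eta^2$ in $H_*\Omega^2S^2$. Your extra remarks on the contractibility of $\Omega^2S^1$ and restriction to the basepoint component just make explicit what the paper already invoked in part (i) of the preceding theorem, so the proposal is correct and follows the same route.
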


\begin{proof}
We apply the homotopy equivalence $\Omega^2\eta$. By this equivalence, any spherical class in $H_*\Omega^2S^2$ is image of a spherical class in $H_*\Omega^2S^3$. By the above computations, spherical classes in $H_*\Omega^2S^3$ arise from $1,\eta\in\pi_*\Omega^2S^3$ which map to $\eta$ and $\eta^2$ in $\pi_*\Omega^2S^2$ under $(\Omega^2\eta)_\#:\pi_*\Omega^2S^3\to\pi_*\Omega^2S^3$. This completes the proof.\\
\end{proof}

\subsection{Further computations}
The aim of this section is to compute spherical classes in more loop spaces associated to spheres. For instance, the decomposition $\Omega S^4\simeq S^3\times\Omega S^7$ provides a decomposition of $2$-loop spaces $\Omega^3S^4\simeq \Omega^2S^3\times\Omega^3S^7$. The spherical classes in the second factor arise from inclusion of the bottom cell. Noting that the decomposition of $\Omega S^7$ off $\Omega S^4$ is provided by $\Omega\nu:\Omega S^7\to\Omega S^4$, the bottom cell of $\Omega S^7$ gives rise to a spherical class in $\Omega^3S^4$ detected by $\nu$. The other spherical classes arise from the first factor, that is the image of $E:\Omega^2S^3\to\Omega^3S^4$. Moreover, note that by adjointing down, we may use a spherical classes in $\Omega^iS^4$, with $i<3$, to get a spherical classes in homology of $\Omega^3S^4$. Since Hurewicz image of $\eta:S^2\to\Omega S^2$ is a square, and since $\Omega E:\Omega S^2\to\Omega^3S^4$ is  loop map, then the Hurewicz image of $\eta:S^2\to\Omega^3S^4$ remains a square which will die under homology suspension $H_*\Omega^3S^4\to H_{*+1}\Omega^2S^4$. We then have proved the following.
\begin{lmm}
The only spherical classes in $H_*\Omega^3S^4$ arise from $\eta,\nu$ and the inclusion of the bottom cell $S^1\to\Omega^3S^4$. Moreover, the only spherical classes in $H_*\Omega^iS^4$ with $1\leqslant i<3$ arise from the inclusion of the bottom cell and $\nu$.
\end{lmm}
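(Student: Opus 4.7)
My plan is to exploit the James splitting $\Omega S^4 \simeq S^3 \times \Omega S^7$, which arises from $H(\nu) = 1$, to reduce the spherical-class problem for $H_*\Omega^3 S^4$ to the factor loop spaces $\Omega^2 S^3$ and $\Omega^3 S^7$, and then use homology suspension to handle the cases $i<3$.

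Looping the splitting twice gives $\Omega^3 S^4 \simeq \Omega^2 S^3 \times \Omega^3 S^7$. For $f : S^n \to \Omega^3 S^4$ with components $f_1, f_2$ into the two factors, the K\"unneth formula yields $h(f) = h(f_1) \otimes 1 + 1 \otimes h(f_2)$, so at least one of $h(f_1), h(f_2)$ must be nontrivial. By the earlier results in this section, the spherical classes of $\Omega^2 S^3$ lie only in degrees $1$ (bottom cell) and $2$ (from $\eta$), while the only spherical class of $\Omega^3 S^7$ is its bottom cell in degree $4$ (this is the case $i=3<4$ of the earlier analysis of $\Omega^i S^7$). Since these three degrees are distinct, every spherical class in $H_*\Omega^3 S^4$ is pure and arises from exactly one of these sources, giving the three classes asserted: the bottom-cell inclusion $S^1 \to \Omega^3 S^4$; the $\eta$-class in $H_2$, pushed forward through $\Omega^2 E : \Omega^2 S^3 \to \Omega^3 S^4$; and the $\nu$-detected class in $H_4$, obtained by transporting the bottom cell of $\Omega^3 S^7$ via $\Omega^3 \nu$.

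For $i < 3$, the key tool is the formula $h(f) = \sigma_*^{3-i} h(\tilde f)$, where $\tilde f : S^{n-(3-i)} \to \Omega^3 S^4$ is the iterated loop adjoint of $f : S^n \to \Omega^i S^4$ and $\sigma_*$ is homology suspension, which annihilates decomposables in the Pontryagin ring. The $\eta$-class in $H_2\Omega^3 S^4$ equals the square of the bottom cell generator: its source in $H_2\Omega^2 S^3$ is $x_1^2$, and the pushforward $\Omega^2 E$, being $\Omega^2$ of the sphere-level suspension $S^3 \to \Omega S^4$, is a loop map and thus a ring map on Pontryagin homology. Hence the $\eta$-class is decomposable and dies under a single $\sigma_*$, so it cannot realise a spherical class in $\Omega^i S^4$ for $i<3$. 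The bottom-cell and $\nu$-classes, being bottom-dimensional in their respective factors, are primitive and therefore survive every iteration of $\sigma_*$, producing the asserted spherical classes $S^{4-i} \to \Omega^i S^4$ and $S^{7-i} \to \Omega^i S^7 \xrightarrow{\Omega^i \nu} \Omega^i S^4$.

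The main obstacle is bookkeeping rather than conceptual: one must verify that $\Omega^2 E$ is genuinely a loop map so that it transmits the decomposable character of the $\eta$-class, and one must carefully align dimensions and loop-adjoints across the $3-i$ intermediate steps between $\Omega^i S^4$ and $\Omega^3 S^4$. Beyond that, the lemma is a clean application of the James splitting, the earlier spherical-class computations, and the standard fact that homology suspension annihilates Pontryagin decomposables.
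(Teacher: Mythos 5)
Your proof is correct and follows essentially the same route as the paper: loop the splitting $\Omega S^4\simeq S^3\times\Omega S^7$ to get $\Omega^3S^4\simeq\Omega^2S^3\times\Omega^3S^7$, import the earlier spherical-class computations for $\Omega^2S^3$ and $\Omega^iS^7$, and rule out the $\eta$-class for $i<3$ because it is a Pontrjagin square and homology suspension kills decomposables. One small caution: your appeal to ``primitive, therefore survives every iteration of $\sigma_*$'' is not a valid general principle (the kernel of homology suspension can be strictly larger than the decomposables); the existence of the bottom-cell and $\nu$-classes in $H_*\Omega^iS^4$ for $i<3$ is better justified directly from the maps $S^{4-i}\to\Omega^iS^4$ and $S^{7-i}\to\Omega^iS^7\xrightarrow{\Omega^i\nu}\Omega^iS^4$, whose Hurewicz images are nonzero since the first is the bottom-cell inclusion and the second has bottom-cell component in the $\Omega^iS^7$ factor of the looped splitting (equivalently, $H(\nu)=1$).
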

A similar reasoning as above, using the decomposition $\Omega S^8\simeq S^7\times\Omega S^{15}$, proves the following.
\begin{lmm}
The only spherical classes in $H_*\Omega^5S^8$ arise from $\sigma$, $\nu$ and the inclusion of the bottom cell $S^3\to\Omega^5S^8$. Moreover, the only spherical classes in $H_*\Omega^iS^4$ with $1\leqslant i<5$ arise from the inclusion of the bottom cell and $\sigma$.
\end{lmm}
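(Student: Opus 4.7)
The plan is to exploit the James--Hopf decomposition of $\Omega S^8$ induced by the Hopf invariant one element $\sigma$, reducing the problem to the spherical class computations already carried out for $\Omega^4 S^7$ and $\Omega^5 S^{15}$ in the theorem at the start of the section. Since $H(\sigma) = 1$, where $H : \Omega S^8 \to \Omega S^{15}$ is the second James--Hopf map, the standard section $E + \Omega\sigma : S^7 \times \Omega S^{15} \to \Omega S^8$ produces a homotopy equivalence $\Omega S^8 \simeq S^7 \times \Omega S^{15}$. Looping four times yields
$$\Omega^5 S^8 \simeq \Omega^4 S^7 \times \Omega^5 S^{15}.$$

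For any pointed map $f = (f_1, f_2) : S^n \to \Omega^4 S^7 \times \Omega^5 S^{15}$, the K\"unneth decomposition gives $h(f) = h(f_1) \otimes 1 + 1 \otimes h(f_2)$ in reduced homology, so $h(f) \neq 0$ forces at least one of $h(f_1)$, $h(f_2)$ to be nonzero; conversely every spherical class in either factor produces a spherical class in the product. By the theorem already established, spherical classes in $H_* \Omega^4 S^7$ occur only in dimensions $3$ and $6$, arising respectively from the identity of $S^7$ and from $\Sigma^3\nu$; and since $5 < 8$, spherical classes in $H_* \Omega^5 S^{15}$ occur only as the bottom cell in dimension $10$, arising from the identity of $S^{15}$. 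Tracking each through the equivalence: the bottom cell of $\Omega^4 S^7$ becomes the bottom cell of $\Omega^5 S^8$; the $\Sigma^3\nu$ class maps under $E : \Omega^4 S^7 \to \Omega^5 S^8$ to a representative of $\nu$; and the bottom cell of $\Omega^5 S^{15}$ composes with $\Omega^5\sigma$ to yield a class representing $\sigma \in \pi_7^s$. This produces exactly the three spherical classes claimed.

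The ``moreover'' clause is handled by the same mechanism applied one coordinate down (the statement should naturally be read in parallel with the preceding lemma for $\Omega^3 S^4$, that is, as concerning $\Omega^i S^8$ for $1 \leq i < 5$). For such $i$, the decomposition $\Omega^i S^8 \simeq \Omega^{i-1} S^7 \times \Omega^i S^{15}$ holds; the second factor contributes only its bottom cell, which becomes the $\sigma$-class in $\Omega^i S^8$ via $\Omega^i\sigma$; and since now $i-1 < 4$, the earlier theorem forces spherical classes in $\Omega^{i-1} S^7$ to consist of only the bottom cell, which maps to the bottom cell of $\Omega^i S^8$. The $\nu$-contribution that appears in $\Omega^5 S^8$ disappears for these smaller $i$ precisely because $\Omega^{i-1} S^7$ no longer carries a spherical class in dimension $6$ when $i-1 < 4$.

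The proof is essentially assembly: the only genuine points to verify are that the product equivalence $\Omega^5 S^8 \simeq \Omega^4 S^7 \times \Omega^5 S^{15}$ does split Hurewicz images additively (standard, since we split a product and not a wedge), and that the three representatives above really map to the stable elements $\sigma$, $\nu$, and the bottom cell after one further stabilization $\Omega^5 S^8 \to QS^3$. The main conceptual obstacle, namely ruling out exotic spherical classes a priori, is already dispatched by the theorem governing $\Omega^4 S^7$ and $\Omega^i S^{15}$ for $i < 8$, so no further input from the Hurewicz computation of Theorem~\ref{main0} is needed here beyond what fed into that theorem.
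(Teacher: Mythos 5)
Your proposal is correct and follows exactly the paper's intended argument: the paper proves this lemma in one line by invoking the decomposition $\Omega S^8\simeq S^7\times\Omega S^{15}$ (looped to give $\Omega^5S^8\simeq\Omega^4S^7\times\Omega^5S^{15}$) together with the theorem governing spherical classes in $H_*\Omega^iS^7$ and $H_*\Omega^iS^{15}$, which is precisely your assembly. You also correctly read the statement's ``$\Omega^iS^4$'' in the moreover clause as a typo for $\Omega^iS^8$ (in parallel with the preceding lemma), and your K\"unneth bookkeeping makes explicit the step the paper leaves implicit.
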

Next, note that as the suspension $\Omega^3S^4\to\Omega^7S^8$ induces a monomorphism, hence we get spherical classes given by the inclusion of the bottom cell, $\eta$ and $\nu$. We also obtain, a spherical class given $\sigma$ by adjointing down the spherical class given by $\sigma$ in $\Omega^5S^8$.


\bibliographystyle{plain}

\end{document}